\newtheorem{theorem}{Theorem}[section]
\newtheorem{proposition}[theorem]{Proposition}
\newtheorem{lemma}[theorem]{Lemma}
\newtheorem{MT}{Main Theorem}
\newtheorem{proof}{\textmd{\textit{Proof.}}}
\newtheorem{remark}[theorem]{Remark}
\newcommand{\qedd}{\hfill \Box}
\newcommand{\wt}{\widetilde}
\title{The Structure Theorem for The cut locus of 
a Certain Class of Cylinders of Revolution I
\footnote{
Mathematics Subject Classification (2010)\,: 53C22.}
\footnote{
Keywords: cut point, cut locus,
cylinder of revolution 
}.}
\author{Pakkinee CHITSAKUL}
\date{}
\begin{document}

\maketitle

\begin{abstract}
The aim of this paper is to determine the structure of the cut locus for a class of surfaces of revolution homeomorphic to a cylinder. Let $M$ denote a cylinder of revolution  which admits a reflective symmetry fixing a  parallel called the equator of $M.$ It will be proved that the cut locus of  a point  $p$ of $M$ is a subset of the union of the meridian and the parallel opposite to $p$ respectively,  if the Gaussian curvature of $M$ is decreasing on each upper half meridian.
\end{abstract}

\section{Introduction}
It is a very difficult problem to determine the structure of the cut locus 
of a Riemannian manifold and it was difficult  even for a quadric surface.

Since  Elerath (\cite{E}) succeeded in specifying the structure of the cut locus 
for paraboloids of revolution  and (2-sheeted) hyperboloids of revolution, 
the structures of the cut locus for quadric surfaces  of revolution have been 
studied.  After his work, Sinclair and Tanaka (\cite{ST})  determined the structure of the 
cut locus for   a class of surfaces of revolution containing the ellipsoids. Notice that
the structures of the cut locus for  triaxial ellipsoids  with unequal axes 
were also determined by Itoh and Kiyohara (\cite{IK}).

On the structure of the cut locus for a cylinder of revolution $(R^1\times S^1, dt^2+m(t)^2d\theta^2),$
Tsuji (\cite{Ts})  first determined the cut locus of a point on 
the equator $t=0$ if the cylinder 
 is symmetric with respect to
 the equator and the  Gaussian curvature is decreasing on the upper half 
 meridian $t>0,\theta=0.$
In 2003, Tamura (\cite{Ta}) determined the structure of the cut locus by 
adding an  assumption $m'\ne 0 $ except $t=0.$
In this paper, we determine the structure of the cut locus without this 
assumption. 

Here, let us review the notion  of a cut point and the cut locus of a point.
Let $\gamma :[0,a]\to M$ be a minimal geodesic segment in a complete Riemannian manifold $M.$ The end point of $\gamma(a)$ is called a {\it cut point} of $\gamma(0)$ along $\gamma,$
if any geodesic extension of $\gamma$ is not minimal anymore.
The {\it cut locus } $C_p$ of a point $p$ of $M$ is by definition the set of the cut points along all minimal geodesic segments emanating from $p.$ 

In this paper we will prove the following theorem.
\begin{MT}

Let $(M,ds^2)$ be a complete Riemannian manifold $R^1 \times S^1$ 
with a warped product metric $ds^2=dt^2+m(t)^2d\theta^2$ of the real line 
$(R^1,dt^2)$ and the unit circle $(S^1,d\theta^2)$.
Suppose that the warping function  $m$ is a positive-valued even 
function and the Gaussian curvature of $M$ is decreasing along 
the half meridian 
$t^{-1}[0,\infty) \cap \theta^{-1}(0)$.
If the Gaussian curvature of $M$ is positive on $t=0,$ then the structure of the cut 
locus $C_q$ of a point $q\in\theta^{-1}(0)$ in $M$ is given as follows:

\begin{enumerate}

\item
The cut locus $C_q$ is the union of a subarc of the  parallel $t=-t(q)$ opposite  to $q$ and the  meridian  opposite  to $q$ if  
$|t(q)|<t_0:= \sup \{t>0\: |\: m'(t)<0 \}$ and  $\varphi(m(t(q)))<\pi.$  More precisely,
$$C_q=\theta^{-1}(\pi) \cup \left( t^{-1}(-t(q)) \cap 
\theta^{-1}[\varphi(m(t(q))),2\pi-\varphi(m(t(q)))]\right).$$
\item
The cut locus $C_q$  is the  meridian $\theta^{-1}(\pi)$  opposite to  $q$ if   $\varphi(m(t(q)))\geq \pi$ or if  $|t(q)|\geq t_0.$

\end{enumerate}
Here,
the function $\varphi(\nu)$   on $(\inf m,m(0))$ is defined as
$$ \varphi(\nu):=2\int_{-\xi_(\nu)}^{0} 
\dfrac{\nu}{m \sqrt{m^2-\nu^2}}dt
=2\int_{0}^{\xi(\nu)} \dfrac{\nu}{m \sqrt{m^2-\nu^2}}dt,
$$
where $\xi(\nu):=\min\{t>0\: | \:  m(t)=\nu\}.$ 
Notice that  the point $q$ is an arbitrarily given point 
if the coordinates $(t,\theta)$ are chosen so as to satisfy $\theta(q)=0.$
\end{MT}

\begin{remark}
If the Gaussian curvature of  a cylinder of revolution is nonpositive everywhere, then
any geodesic has no conjugate point. Therefore,  it is clear to see that the 
 cut locus of a point on the manifold   is the 
 meridian opposite to the point. 

\end{remark}

\section{Preliminaries}
Let $f$ be the solution of the differential equation
\begin{equation}\label{eq:2.1}
f''+Kf=0
\end{equation}
with  initial conditions $f(0)=c$ and $f'(0)=0.$
Here $c$ denotes a fixed positive number and 
 $K:[0,\infty)\to R$ denotes  a  continuous function.

\begin{lemma}\label{lem2.1}
If $K(0)>0$ and $f'(t)\ne0$ for any $t>0$, then $f'(t)<0$ on $(0,\infty)$.
Furthermore, if $f>0$ on $[0,\infty)$, then $K(t)<0$ for some $t>0$.
\end{lemma}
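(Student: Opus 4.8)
The plan is to treat the two assertions in turn, extracting everything from the sign of $f''$ near the origin and from the concavity forced when $K\ge 0$.

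For the first assertion, I would begin by evaluating equation \eqref{eq:2.1} at the origin: since $f(0)=c>0$ and $K(0)>0$, we obtain $f''(0)=-K(0)c<0$. Because $f'(0)=0$, this means $f'$ is strictly decreasing at $t=0$, so $f'(t)<0$ on some interval $(0,\delta)$. The remaining task is to propagate this sign to all of $(0,\infty)$, and here the hypothesis that $f'$ never vanishes for $t>0$ does the work: $f'$ is continuous, it is already negative near $0$, and by the intermediate value theorem it cannot become positive without first passing through a zero. Hence $f'<0$ on all of $(0,\infty)$.

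For the second assertion I would argue by contradiction, supposing $K\ge 0$ on $[0,\infty)$ (the negation of the conclusion, which is consistent with $K(0)>0$). Since $f>0$ on $[0,\infty)$ by assumption, the equation gives $f''=-Kf\le 0$, so $f$ is concave and $f'$ is non-increasing. Fixing any $t_0>0$, the first part yields $f'(t_0)<0$, and concavity gives $f'(t)\le f'(t_0)<0$ for all $t\ge t_0$. Integrating this bound forces $f(t)\le f(t_0)+f'(t_0)(t-t_0)$, whose right-hand side tends to $-\infty$ as $t\to\infty$, contradicting $f>0$. Therefore $K(t)<0$ for some $t>0$.

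These are elementary ODE facts, so I do not expect a deep obstacle; the only point requiring care is the sign-propagation in the first part, where one must invoke the non-vanishing hypothesis rather than monotonicity of $f'$ (which is unavailable, since $K$ may change sign and so $f'$ need not be monotone). The concavity estimate then closes the second part cleanly, the crux being that a positive concave function cannot sustain a strictly negative derivative without eventually becoming negative.
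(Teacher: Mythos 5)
Your proposal is correct and follows essentially the same route as the paper: the sign of $f''(0)=-K(0)c<0$ gives $f'<0$ near $0$, the non-vanishing hypothesis propagates the sign by the intermediate value theorem, and the second claim is obtained by contradiction from the concavity $f''=-Kf\le 0$. Your explicit linear bound $f(t)\le f(t_0)+f'(t_0)(t-t_0)$ merely spells out the step the paper leaves implicit when it says the non-increasing negative derivative "contradicts the assumption $f>0$."
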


\begin{proof}

Since $f''(0)=-K(0)f(0)<0$ by \eqref{eq:2.1},
$f '(t)$
 is strictly  decreasing on $(0,\delta)$ for some $\delta>0.$ This implies that 
 $0=f'(0)>f'(t) $ for any $t\in(0,\delta)$. Since $f'\ne0$ on 
 $[0,\infty),$  $f'(t)<0$ 
 on $(0,\infty)$. Furthermore, we assume that $f>0$ on $[0,\infty).$ 
Supposing that $K\geq 0$ on $ [0,\infty),$ we will get a  contradiction.
By \eqref{eq:2.1},
$$f''(t)=-K(t)f(t)\leq 0$$
on $[0,\infty).$ Hence $f'(t) $ is decreasing on $[0,\infty).$ In particular, 
$0=f'(0)>f '(\delta)\geq f '(t)$ for any $t\geq\delta.$ This contradicts the 
assumption $f>0.$
$\qedd$
\end{proof}

\begin{lemma}\label{lem2.2}
Suppose that $K(0)>0$ and $f>0$ on $[0,\infty)$. If $f'(t)=0$ for some $t>0$
 and $K$ is decreasing,
 then there exist a unique solution $t=t_0\in(0,\infty)$ of $f'(t)=0$ 
 such that $f'(t)<0$ on $(0,t_0)$ and $f'(t)>0$ on $(t_0,\infty)$ 
 and there exists  $t_1\in(0,t_0)$ satisfying $K(t_1)=0$. 
 Hence $K\geq0$ on $[0,t_1]$ and $K\leq0$ on $[t_1,\infty)$.
\end{lemma}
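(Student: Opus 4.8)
The plan is to locate the first critical point of $f'$ and then pin down the sign of $K$ there. First I would observe, exactly as in the proof of Lemma \ref{lem2.1}, that $f''(0)=-K(0)f(0)<0$ forces $f'$ to be strictly decreasing near $0$, so $f'<0$ on some interval $(0,\delta)$. Since by hypothesis the closed set $\{t>0:\ f'(t)=0\}$ is nonempty, I would set $t_0:=\inf\{t>0:\ f'(t)=0\}$. Continuity of $f'$ then gives $t_0\ge\delta>0$ and $f'(t_0)=0$, while the absence of zeros on $(0,t_0)$ together with $f'<0$ near $0$ yields $f'<0$ on all of $(0,t_0)$.

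The crux is to show $K(t_0)<0$, and I would argue by contradiction. If instead $K(t_0)\ge0$, then since $K$ is decreasing we would have $K\ge K(t_0)\ge0$ on $[0,t_0]$, hence $f''=-Kf\le0$ there because $f>0$. Thus $f'$ would be non-increasing on $[0,t_0]$; but $f'(0)=f'(t_0)=0$ then forces $f'\equiv0$ on $[0,t_0]$, contradicting $f'<0$ on $(0,t_0)$. Therefore $K(t_0)<0$. This is the step I expect to be the main obstacle, since the whole conclusion hinges on determining the sign of $K$ at the first critical point; once that is fixed, the rest is propagation.

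Finally I would carry this information in both directions. Because $K$ is decreasing and $K(t_0)<0$, we get $K<0$ on $[t_0,\infty)$, so $f''=-Kf>0$ and $f'$ is strictly increasing there; as $f'(t_0)=0$, this gives $f'>0$ on $(t_0,\infty)$, which proves simultaneously that $t_0$ is the unique zero of $f'$ and the claimed sign pattern $f'<0$ on $(0,t_0)$, $f'>0$ on $(t_0,\infty)$. On the other side, $K(0)>0>K(t_0)$ together with continuity of $K$ yields, by the intermediate value theorem, a point $t_1$ with $K(t_1)=0$; monotonicity of $K$ then forces $0<t_1<t_0$ and gives $K\ge0$ on $[0,t_1]$ and $K\le0$ on $[t_1,\infty)$, completing the proof.
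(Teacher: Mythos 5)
Your proof is correct, but it takes a genuinely different route from the paper's. The paper first establishes uniqueness of the zero of $f'$ by contradiction: assuming two zeros $a<b$, Rolle's theorem produces zeros of $f''$ (hence of $K$) at $t_1\in(0,a)$ and $s_1\in(a,b)$, monotonicity of $K$ forces $K\equiv 0$ and hence $f''\equiv 0$ on $[t_1,s_1]$, so $f'$ is constant there and vanishes already at $t_1<a$, contradicting minimality; it then applies Rolle again on $[0,t_0]$ to find $t_1$ with $K(t_1)=0$, and only afterwards reads off the sign of $f'$ beyond $t_0$. You instead go straight for the sign of $K$ at the first zero $t_0$ of $f'$: the observation that $K(t_0)\ge 0$ would make $f'$ non-increasing on $[0,t_0]$ with equal endpoint values, hence identically zero, is a clean contradiction, and $K(t_0)<0$ then gives $f''>0$ on $[t_0,\infty)$, so that $f'>0$ there and uniqueness falls out as a byproduct rather than needing a separate argument. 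Your version also yields the strict inequality $f'>0$ on $(t_0,\infty)$ immediately (the paper has to append the remark that $f'\ge 0$ plus uniqueness of the zero implies $f'>0$), and it locates $t_1$ by the intermediate value theorem applied to $K$ rather than by Rolle applied to $f'$. Both arguments are elementary and of comparable length; the paper's is organized around counting zeros of $f'$, yours around the sign of $K$ at the first critical point.
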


\begin{proof}
Let $a>0$ denote the minimum positive solution $t=a$ of $f'(t)=0.$
Suppose that there exist another solution $b(>a)$ satisfying $f'(b)=0$.
By the mean value theorem, there exist $t_1\in(0,a)$ and $s_1\in(a,b)$ 
satisfying $f''(t_1)=f''(s_1)=0$. Hence $K(t_1)=K(s_1)=0$ by \eqref{eq:2.1}.
Since $K$ is decreasing,  $K=0$ on $[t_1,s_1]$. 
Therefore, by \eqref{eq:2.1}, $f''(t)=0$ on $[t_1,s_1]$. 
In particular, $f'(a)=f'(t_1)=0.$ 
Since $0<t_1<a$, $t_1$ is a positive solution $t$ of $f'(t)=0$, 
which is less than $a$. This is a contradiction.
Therefore, there exists a unique positive solution $t=t_0$ of $f'(t)=0.$
 From the mean value theorem and \eqref{eq:2.1}, 
there exists $t_1\in(0,t_0)$ satisfying $K(t_1)=0.$  Since $K(t)$ is 
decreasing, $K\geq0$ on $[0,t_1]$ and $K\leq0$ on $[t_1,\infty).$ 
Hence by \eqref{eq:2.1},
$f''(t)=-K(t)f(t)\geq 0$ on $[t_1,\infty)$ and 
$f '(t)\geq f '(t_0)=0$ for any $t>t_0.$ 
Since $f'$ has a unique positive zero,  $f'>0$ on $(t_0,\infty).$
It is clear from the proof of Lemma \ref{lem2.1} that $f'<0$ on $(0,t_0).$
$\qedd$
\end{proof}
\section{Review of the behavior of geodesics}

From now on, $M$ denotes a complete Riemannian manifold $R^1\times S^1$ with 
a warped product Riemannian metric $ds^2=dt^2+m(t)^2d\theta^2$ of 
the real line $(R^1,dt^2)$ and the unit circle $(S^1,d\theta^2)$. 
Let us review the behavior of a geodesic
 $\gamma(s)=(t(s),\theta(s))$ on the manifold $M$.
For each unit speed geodesic $\gamma(s)=(t(s),\theta(s)),$
there exists a constant $\nu$ satisfying 
\begin{equation}\label {eq:3.1}
m(t(s))^2\theta'(s)=\nu.
\end{equation}
Hence, if $\eta(s)$ denotes the angle made by the velocity vector $\gamma'(s)$ of the  geodesic $\gamma(s)$ 
and the tangent vector $(\partial/\partial\theta)_{\gamma(s)},$ then 
\begin{equation}\label{cl}
m(t(s))\cos\eta(s)=\nu 
\end{equation}
 for any $s.$ The constant $\nu$ is called the 
{\it Clairaut constant} of $\gamma.$
The reader should refer to Chapter 7 in \cite{SST} for the Clairaut relation.
Since $\gamma(s)$ is unit speed, 
\begin{equation}\label{eq:3.2}
t'(s)^2+m(t(s))^2\theta'(s)^2=1
\end{equation}
holds. By \eqref{eq:3.1} and \eqref{eq:3.2},
it follows that 
\begin{equation}\label{eq:3.3}
t'(s)= \pm \dfrac{\sqrt{m(t(s))^2-\nu^2}}{m(t(s))}
\end{equation}
\begin{equation}\label{eq:3.4}
\theta(s_2)-\theta(s_1)= \varepsilon(t'(s)) \int_{t(s_1)}^{t(s_2)} 
\dfrac{\nu}{m \sqrt{m^2-\nu^2}}dt
\end{equation}
holds, if $t'(s)\neq 0$ on $(s_1,s_2)$ and $\varepsilon(t'(s))$ denotes 
the sign of $t'(s)$.

The length $L(\gamma)$ of a geodesic segment $\gamma(s)=(t(s),\theta(s))$, 
$s_1 \leq s \leq s_2$ is
\begin{equation}\label{eq:3.5}
L(\gamma)= \varepsilon(t'(s)) \int_{t(s_1)}^{t(s_2)} 
\dfrac{m(t)}{\sqrt{m(t)^2-\nu^2}}dt
\end{equation}
if $t'(s) \neq 0$ on $(s_1,s_2).$

From a direct computation, the Gaussian curvature $G$ of $M$ is given by
\begin{equation*}
G(q)=-\dfrac{m''}{m}(t(q))
\end{equation*}  
at each point $q\in M$. Since $G$ is constant on $t^{-1}(a)$ for 
each $a\in R$,
a smooth function $K$ on $R$ is defined by
\begin{equation*}
K(u):=G(q)
\end{equation*}  
for $q\in t^{-1}(u)$. Therefore $m$ satisfies the following 
differential equation 
\begin{equation*}
m''+Km=0
\end{equation*}  
with $m'(0)=0$.

From now on, we assume that the Gaussian curvature $G$ of $M$ 
is positive on $t^{-1}(0)$, and $m(t)=m(-t)$ holds for any $t\in R$.
Hence, $M$ is symmetric with respect to the equator $t=0$ and if $K$ 
is decreasing on $[0,\infty)$, then  by 
Lemma \ref{lem2.2},
$m'(t)<0$ for all $t>0$ or there exists a unique positive solution 
$t=t_0$ of $m'(t)=0$ such that
$m'<0 $ on $(0,t_0)$ and $m'>0$  on $(t_0,\infty).$ 
Furthermore, if the latter case happens, there exists $t_1\in(0,t_0)$ 
such that $K\geq 0$ on $[0,t_1]$ and $K\leq 0$ on $[t_1,\infty).$

For technical reasons, we treat both geodesics on $M$ and its 
universal covering space $\pi: \wt 
M\to M,$
where $\wt M:=(R^1\times R^1,d\tilde t^2+m(\tilde t)^2d\tilde\theta^2).$

Choose any point $p$ on the equator $t=0$. We may assume that $\theta(p)=0$ 
without loss of generality. Let $\gamma:[0,\infty) \to M$ denote
a geodesic emanating from $p=\gamma(0)$ with Clairaut constant 
$\nu \in (\inf m,m(0))$.
Notice that $\gamma$ is uniquely  determined up to the reflection with respect to $t=0.$
The geodesic $\gamma(s)=(t(s),\theta(s))$ is tangent to the parallel 
$t=\xi(\nu)$ ( if $t\circ\gamma)'(0)>0$ ) or $t=-\xi(\nu)$ ( if $(t\circ\gamma)'(0)<0$ ), where $\xi(\nu)>0$ denotes  the least 
positive  solution of 
$m(\xi(\nu))=\nu,$ that is,
\begin{equation*}
\xi(\nu):=\min \{ u>0\:  |\:  m(u)=\nu \}.
\end{equation*}

After $\gamma$ is tangent to the parallel $t=\xi(\nu)$ or $-\xi(\nu)$, 
$\gamma$ intersects the equator $t=0$ again.
Thus, after  $\tilde\gamma$  is tangent to the {\it parallel arc} $\tilde t=\xi(\nu)$ or 
$-\xi(\nu),$ $\tilde \gamma$ intersect $\tilde t=0$ again. Here
  $\tilde\gamma$ denotes a geodesic on $\wt M$ satisfying 
  $\gamma=\pi\circ\tilde\gamma.$

From \eqref{eq:3.4}, we obtain,
\begin{equation*}
\tilde\theta(s_0)-\tilde\theta(0)=  \int_{-\xi(\nu)}^{0} 
\dfrac{\nu}{m \sqrt{m^2-\nu^2}}dt
= \int_{0}^{\xi(\nu)} 
\dfrac{\nu}{m \sqrt{m^2-\nu^2}}dt,
\end{equation*}
and
\begin{equation*}
\tilde\theta(s_1)-\tilde\theta(s_0)=  \int_{-\xi(\nu)}^{0} 
\dfrac{\nu}{m \sqrt{m^2-\nu^2}}dt
= \int_{0}^{\xi(\nu)} 
\dfrac{\nu}{m \sqrt{m^2-\nu^2}}dt,
\end{equation*}
where  $s_0:=\min \{ s>0 \: |\: m(\tilde t(s))=\nu \} , 
        s_1:=\min \{ s>0 \: |\:  \tilde t(s)=0 \}$.

By summing up the argument above, we have,
\begin{lemma}\label{lem3.1}
Let $\tilde\gamma(s)=(\tilde t(s),\tilde\theta(s))$ denote a geodesic emanating 
from the point $\tilde p:=(\tilde  t,\tilde\theta)^{-1}(0,0)$ with Clairaut constant
 $\nu \in (\inf m,m(0))$. Then $\tilde\gamma$ intersects $\tilde t=0$
 again at  the point $(\tilde t,\tilde \theta)^{-1} (0,\varphi(\nu) ).$
Here,  
\begin{equation}\label{eq:3.6}
 \varphi(\nu):=2\int_{-\xi_(\nu)}^{0} 
\dfrac{\nu}{m \sqrt{m^2-\nu^2}}dt
=2\int_{0}^{\xi(\nu)} \dfrac{\nu}{m \sqrt{m^2-\nu^2}}dt.
 \end{equation}
\end{lemma}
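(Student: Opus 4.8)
The plan is to realize the claimed $\theta$-displacement $\varphi(\nu)$ as the sum of two equal contributions: the $\theta$-advance accumulated while $\tilde\gamma$ rises from the equator $\tilde t=0$ to its turning parallel, and the $\theta$-advance accumulated while it descends back to the equator. Both are read off directly from the integral formula \eqref{eq:3.4}, so the whole argument reduces to controlling the qualitative behavior of $\tilde t(s)$ between $s=0$ and its first return $s=s_1$ to $\tilde t=0$.

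First I would normalize by the reflection $\tilde t\mapsto -\tilde t$ so that $(\tilde t\circ\tilde\gamma)'(0)>0$; the opposite case is the mirror image and yields the same $\varphi(\nu)$. Since $\nu<m(0)$, equation \eqref{eq:3.3} gives $\tilde t'(0)=\sqrt{m(0)^2-\nu^2}/m(0)>0$, so $\tilde\gamma$ genuinely enters $\tilde t>0$. By definition $\xi(\nu)$ is the least positive solution of $m=\nu$, hence $m>\nu$ on $(0,\xi(\nu))$; by \eqref{eq:3.3} this forces $\tilde t'>0$ on $(0,s_0)$, so $\tilde t$ increases monotonically and $s_0=\min\{s>0:m(\tilde t(s))=\nu\}$ is exactly the first time $\tilde t$ reaches $\xi(\nu)$. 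Applying \eqref{eq:3.4} with $\varepsilon(\tilde t')=+1$ on this interval gives
\[
\tilde\theta(s_0)-\tilde\theta(0)=\int_0^{\xi(\nu)}\frac{\nu}{m\sqrt{m^2-\nu^2}}\,dt.
\]

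For the descending leg I would argue that once $\tilde t'(s_0)=0$ the geodesic turns and $\tilde t$ decreases: for $\tilde t\in(0,\xi(\nu))$ one still has $m>\nu$, so $\tilde t'=-\sqrt{m^2-\nu^2}/m<0$ until $\tilde t$ reaches $0$ again, and the length formula \eqref{eq:3.5} shows that $s_1-s_0=\int_0^{\xi(\nu)}m/\sqrt{m^2-\nu^2}\,dt$ is finite, so $s_1$ exists with $\tilde t(s_1)=0$. Now \eqref{eq:3.4} with $\varepsilon(\tilde t')=-1$, integrating from $\tilde t(s_0)=\xi(\nu)$ down to $\tilde t(s_1)=0$, produces
\[
\tilde\theta(s_1)-\tilde\theta(s_0)=-\int_{\xi(\nu)}^{0}\frac{\nu}{m\sqrt{m^2-\nu^2}}\,dt=\int_0^{\xi(\nu)}\frac{\nu}{m\sqrt{m^2-\nu^2}}\,dt,
\]
the same value as the ascending leg. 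Summing the two displacements and using $\tilde\theta(0)=0$ yields $\tilde\theta(s_1)=\varphi(\nu)$, so $\tilde\gamma$ meets $\tilde t=0$ at $(0,\varphi(\nu))$ as claimed. The second integral form of $\varphi(\nu)$ follows from the substitution $t\mapsto -t$ together with the evenness $m(-t)=m(t)$.

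The main obstacle is not the bookkeeping but the two analytic points that make the formula \eqref{eq:3.4} legitimate here: that the integrand has only an integrable singularity at the turning parallel, and that the turning genuinely occurs rather than degenerating into an asymptotic approach. Both hinge on $m'(\xi(\nu))\neq 0$. Since $\nu\in(\inf m,m(0))$ and, by Lemma \ref{lem2.2}, $m$ is strictly decreasing on the initial interval carrying the first crossing, we have $m'(\xi(\nu))<0$; then $m^2-\nu^2\sim 2m(\xi(\nu))m'(\xi(\nu))\,(t-\xi(\nu))$ near $\xi(\nu)$ gives an integrand of order $|t-\xi(\nu)|^{-1/2}$, which is integrable. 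This simultaneously guarantees the finiteness of $\varphi(\nu)$ and a nondegenerate, transversal turning point at $\tilde t=\xi(\nu)$.
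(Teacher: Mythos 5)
Your proof is correct and follows essentially the same route as the paper, which likewise splits the trip into the ascending leg up to the tangent parallel $\tilde t=\pm\xi(\nu)$ and the descending leg back to $\tilde t=0$, applies \eqref{eq:3.4} to each, and adds the two equal contributions. The only difference is that you additionally justify the integrability of the singularity at the turning parallel and the non-degeneracy of the turning point via $m'(\xi(\nu))\neq 0$ (from Lemma \ref{lem2.2}), points the paper leaves implicit.
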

\begin{lemma}\label{lem3.2}

The length $l(\nu)$ of the subarc $(\tilde t(s),\tilde\theta(s)),
 0\leq\tilde\theta(s)\leq\varphi(\nu),$ of $\tilde \gamma(s)$ is given by

\begin{equation}\label{eq:3.7}
l(\nu)=2\int ^0_{-\xi(\nu)} \frac{m}{\sqrt{ m^2-\nu^2   }} dt=2\int_{-\xi(\nu)}^{0} \frac{\sqrt{m^2-\nu^2}}{m}dt+\nu \varphi(\nu),
\end{equation}
and 
\begin{equation}\label{eq:3.8}
\frac{\partial  l}{\partial \nu} (\nu)=\nu\varphi'(\nu).
\end{equation}
\end{lemma}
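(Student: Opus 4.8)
The plan is to read off the first equality in \eqref{eq:3.7} from the length formula \eqref{eq:3.5}, to obtain the second equality by an algebraic splitting of the integrand, and then to establish \eqref{eq:3.8} by differentiating the second expression for $l(\nu)$ with respect to $\nu$. First I would decompose the subarc $0\le\tilde\theta(s)\le\varphi(\nu)$ into its two monotone pieces. By the discussion preceding Lemma \ref{lem3.1}, as $s$ runs from $0$ to $s_1$ the geodesic $\tilde\gamma$ first descends from $\tilde t=0$ to the tangent parallel $\tilde t=-\xi(\nu)$ (on $(0,s_0)$, where $t'<0$) and then ascends back to $\tilde t=0$ (on $(s_0,s_1)$, where $t'>0$). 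Applying \eqref{eq:3.5} to each piece, with $\varepsilon(t')=-1$ on the first and $\varepsilon(t')=+1$ on the second, both pieces contribute the same quantity $\int_{-\xi(\nu)}^{0}\dfrac{m}{\sqrt{m^2-\nu^2}}\,dt$, which produces the factor $2$ and hence the first equality in \eqref{eq:3.7}.

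For the second equality I would split the integrand as
$$\frac{m}{\sqrt{m^2-\nu^2}}=\frac{\sqrt{m^2-\nu^2}}{m}+\frac{\nu^2}{m\sqrt{m^2-\nu^2}}.$$
The first term integrates to $\int_{-\xi(\nu)}^{0}\dfrac{\sqrt{m^2-\nu^2}}{m}\,dt$, while twice the integral of the second term equals $\nu\cdot 2\int_{-\xi(\nu)}^{0}\dfrac{\nu}{m\sqrt{m^2-\nu^2}}\,dt=\nu\varphi(\nu)$ by the definition \eqref{eq:3.6} of $\varphi$. This yields the second expression for $l(\nu)$.

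Finally, to prove \eqref{eq:3.8} I would differentiate $l(\nu)=I(\nu)+\nu\varphi(\nu)$, where $I(\nu):=2\int_{-\xi(\nu)}^{0}\dfrac{\sqrt{m^2-\nu^2}}{m}\,dt$. The product rule gives $\dfrac{d}{d\nu}\bigl(\nu\varphi(\nu)\bigr)=\varphi(\nu)+\nu\varphi'(\nu)$, so it suffices to show $I'(\nu)=-\varphi(\nu)$. Differentiating $I$ by the Leibniz rule, the boundary contribution at the moving endpoint $t=-\xi(\nu)$ vanishes because the integrand $\sqrt{m^2-\nu^2}/m$ is zero there (since $m(-\xi(\nu))=\nu$), and the interior term is
$$I'(\nu)=2\int_{-\xi(\nu)}^{0}\frac{\partial}{\partial\nu}\left(\frac{\sqrt{m^2-\nu^2}}{m}\right)dt=-2\nu\int_{-\xi(\nu)}^{0}\frac{dt}{m\sqrt{m^2-\nu^2}}=-\varphi(\nu),$$
again by \eqref{eq:3.6}. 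Adding the two pieces gives $l'(\nu)=-\varphi(\nu)+\varphi(\nu)+\nu\varphi'(\nu)=\nu\varphi'(\nu)$.

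The main obstacle is the rigorous justification of this differentiation: although the integrand of $I$ is continuous up to $t=-\xi(\nu)$, its $\nu$-derivative $-\nu/(m\sqrt{m^2-\nu^2})$ has an integrable singularity there, so differentiation under the integral sign is not immediate. I would resolve this with the substitution $u=m(t)$, which is legitimate because $m'<0$ on $(0,\xi(\nu)]$ and hence, by evenness, $m'>0$ on $[-\xi(\nu),0)$, so $m$ is strictly increasing from $\nu$ to $m(0)$ on $[-\xi(\nu),0]$; this converts $I$ into an integral over the fixed interval $[\nu,m(0)]$, and the same substitution applied to $\varphi$ lets one verify the cancellation by a dominated-convergence (or explicit difference-quotient) estimate near the endpoint $u=\nu$. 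Since $m'(\xi(\nu))\ne0$ keeps $\xi'(\nu)=1/m'(\xi(\nu))$ finite, the endpoint term is genuinely zero rather than an indeterminate product, so the formal computation above is valid.
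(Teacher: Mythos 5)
Your proposal is correct and follows essentially the same route as the paper: read off the first equality from \eqref{eq:3.5}, split the integrand as $\frac{m}{\sqrt{m^2-\nu^2}}=\frac{\sqrt{m^2-\nu^2}}{m}+\frac{\nu^2}{m\sqrt{m^2-\nu^2}}$, and differentiate so that the $-\varphi(\nu)$ from the first term cancels the $+\varphi(\nu)$ from the product rule. The only difference is that you supply a careful justification (vanishing boundary term, substitution $u=m(t)$) for the differentiation under the integral sign, which the paper simply asserts.
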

\begin{proof}
From \eqref{eq:3.5}, we obtain, 
\begin{equation*}
l(\nu)=2\int ^0_{-\xi(\nu)} \frac{m}{\sqrt{ m^2-\nu^2   }} dt.
\end{equation*}
Since
$$\frac{m}{\sqrt{ m^2-\nu^2   }  }=\frac{\sqrt{ m^2-\nu^2   } }{m}+\frac{\nu^2}{m  
{\sqrt{ m^2-\nu^2  } }}$$
holds,
we get
\begin{equation*}
l(\nu)=2\int^0_{-\xi(\nu)} \frac{  {\sqrt{ m^2-\nu^2 }  }}{m}dt+2\int^0_{-\xi(\nu)}\frac{\nu^2}{m {\sqrt{ m^2-\nu^2  } }}dt.
\end{equation*}
Hence, by \eqref{eq:3.6}, we get \eqref{eq:3.7}.
By differentiating $l(\nu)$ with respect to $\nu,$ we get,
\begin{equation*}
l'(\nu)=2\int^0_{-\xi(\nu)} \frac{\partial}{\partial\nu}\frac{ \sqrt{ m^2-\nu^2   }}{m}dt+\varphi(\nu)+\nu\varphi'(\nu)=\nu\varphi'(\nu).
\end{equation*}

$\qedd$

\end{proof}


\section{The decline of the function $\varphi(\nu)$}

Let $\pi:\wt M=(R^1\times R^1, d\tilde t^2+m(\tilde t)^2d\tilde\theta^2)\to M$ denote the universal covering space of $M.$
 We choose an arbitrary point $\tilde p$ of $\tilde t^{-1}(-\infty,0],$  and we denote the cut locus of $\tilde p$ by $C_{\tilde p}.$
Before proving some lemmas on the cut locus, let us review the structure of the cut locus of $\wt M.$  We refer to \cite{ShT} or \cite{SST} on the structure of the cut locus of a 2-dimensional complete Riemannian manifold.

It is known that the cut locus has a local tree structure. Since $\wt M$ is simply connected, the cut locus has no circle.
If two cut points $x$ and $y$ are in a common connected component of the cut locus, then $x$ and $y$ are connected by a unique rectifiable arc in the cut locus.

Since $\wt M$ is homeomorphic to $R^2,$ we may define a global sector at each cut point. 
For general  surfaces,  only local sectors are defined (see \cite {ShT}, or \cite{SST}).  
A  global {\it sector} at each cut point $x$ of the point $\tilde p$  is by definition a connected component of $\wt M\setminus \Gamma_x ,$
where $\Gamma_x$ denotes the set  of all points lying on a minimal geodesic segment joining $\tilde p$ to $x.$ Let $c:[0,a]\to C_{\tilde p}$ denote a rectifiable arc in the cut locus.
Then for each cut point $c(t), t\in(0,a),$ $c$ bisects the sector at $c(t)$ containing $c[0,t)$ (respectively $c(t,a]$) .
For each sector of the point $\tilde p$ on $\wt M,$ there exists an end point of $C_{\tilde p},$ 
since $C_{\tilde p}$ has no circle. Here, a cut point $q$ of $\tilde p$ is called an {\it end point} if $q$ admits exactly one sector.

In this section, we assume 
that the Gaussian curvature $G$ of $M$ is increasing on the half meridian 
$t^{-1}(-\infty,0] \cap \theta^{-1}(0)$  and that $M$ has  a reflective symmetry with respect to $t=0.$
 Hence the Gaussian curvature of $\wt M$  is increasing on the lower  half meridian $\tilde t^{-1}(-\infty,0]\cap\tilde\theta^{-1}(0)$ and $\wt M$ has  a reflective  symmetry with respect to $\tilde t=0.$

\begin{lemma}\label{lem4.1}

Suppose that there exists a cut point  of the point  $\tilde p$ in 
$\tilde  t^{-1}(-\infty,0).$
Then  there exist two minimal geodesic segments $\alpha$ and $\beta$
joining $\tilde p$ to a cut point  $y$ of $\tilde p$  such that the global sector $D(\alpha,\beta)$ bounded by  $\alpha$ and $\beta$ has an end point of $C_{\tilde p}$ and 
 $D(\alpha,\beta)\subset\tilde t^{-1}(-\infty,0).$

\end{lemma}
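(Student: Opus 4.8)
The plan is to combine the tree structure of $C_{\tilde p}$ with a reflection argument that traps minimizers in the lower half. Throughout I normalize so that $\tilde\theta(\tilde p)=0$, and I exploit the isometry $R\colon(\tilde t,\tilde\theta)\mapsto(-\tilde t,\tilde\theta)$ of $\wt M$, which is well defined precisely because $m$ is even and which fixes the equatorial geodesic $\tilde t=0$ pointwise. I will also use the structural facts recalled above: $C_{\tilde p}$ is a tree without circles, every sector of $\tilde p$ contains an end point of $C_{\tilde p}$, and a cut point admits exactly one sector (is an end point) precisely when it is joined to $\tilde p$ by a single minimal geodesic segment, while every other cut point carries at least two minimizers.

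The key geometric step, which I expect to be the main obstacle, is to show that every minimal geodesic $\gamma$ from $\tilde p$ to a cut point $z$ with $\tilde t(z)<0$ stays in $\tilde t^{-1}(-\infty,0]$, meeting the equator at most at $\tilde p$. Suppose instead that $\gamma$ entered the upper half on some subinterval $[s_1,s_2]$, with $\gamma(s_1),\gamma(s_2)\in\tilde t^{-1}(0)$ and $\gamma((s_1,s_2))\subset\tilde t^{-1}(0,\infty)$. Replacing $\gamma|_{[s_1,s_2]}$ by its image under $R$ produces a path from $\tilde p$ to $z$ of the same length lying in $\tilde t^{-1}(-\infty,0]$ but with corners at $\gamma(s_1)$ and $\gamma(s_2)$, since $\gamma$ meets the equator transversally there (a tangential meeting would force $\gamma$ to coincide with the equatorial geodesic, impossible as $z$ is off the equator). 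A broken geodesic can be strictly shortened near a corner, contradicting minimality; hence $\gamma\subset\tilde t^{-1}(-\infty,0]$, and in fact $\gamma\subset\tilde t^{-1}(-\infty,0)$ away from $\tilde p$.

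It then remains to extract the required configuration. By hypothesis there is a cut point $x$ with $\tilde t(x)<0$. If $x$ carries at least two minimizers I set $y:=x$; otherwise $x$ is an end point, and I move a short distance along the unique cut-locus arc issuing from $x$ to reach an ordinary cut point $y$, which still satisfies $\tilde t(y)<0$ by continuity and carries exactly two minimizers. In either case let $\alpha,\beta$ be two minimizers to $y$, consecutive in the cyclic order of their initial directions at $\tilde p$, that bound a \emph{bounded} component $D(\alpha,\beta)$ of $\wt M\setminus\Gamma_y$. By the confinement step $\alpha\cup\beta\subset\tilde t^{-1}(-\infty,0]$, so this closed curve bounds a disk inside the simply connected region $\tilde t^{-1}(-\infty,0]$; thus the open sector $D(\alpha,\beta)$ lies in $\tilde t^{-1}(-\infty,0)$, its only possible equatorial contact being the single point $\tilde p$. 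Finally, the recalled property that every sector contains an end point of $C_{\tilde p}$ furnishes an end point inside $D(\alpha,\beta)$, completing the argument.

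I expect the confinement step to be the substantive point: once the even symmetry of $m$ is used to trap minimizers below the equator, the tree structure and the sector/end-point dictionary reduce the selection of $y$, $\alpha$, $\beta$ to bookkeeping, the only care being to single out the bounded sector rather than the unbounded one.
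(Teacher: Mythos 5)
Your proposal is correct and follows essentially the same route as the paper: find a nearby cut point in $\tilde t^{-1}(-\infty,0)$ admitting two minimizers, use the reflective symmetry to confine the resulting sector to the lower half, and invoke the tree structure of $C_{\tilde p}$ to locate an end point in that sector. Your reflection-and-corner shortcut argument is simply the rigorous version of the step the paper dismisses as ``trivial'' by symmetry, so no substantive difference remains.
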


\begin{proof}

Since the subset of cut points admitting at least two minimal geodesics 
is dense in the cut locus,  the existence of two minimal geodesics $\alpha$ and $\beta$ is clear
(see \cite{Bh}).
Since $\wt M$ has a reflective symmetry with respective to $\tilde t=0,$ it is trivial that 
 $D(\alpha,\beta)\subset \tilde t^{-1}(-\infty,0).$ 
Let $y$ denote the end point of $\alpha$ distinct from $\tilde p.$
Since the proof is complete in the case where the cut point $y$ is not an end point of the cut locus, we assume that $y$ is an end point. Then, we get an arc $c$ in the cut locus emanating from $y.$ Any interior point $y_1$ on $c$ is not an end point of the cut locus. It is clear that 
there exist two minimal geodesic segments joining $\tilde p$ and $y_1$ which bound a sector  containing $y$ as an end point of the cut locus.
$\qedd$
\end{proof}

\begin{lemma}\label{lem4.3}
For any unit speed minimal geodesic segment $\gamma:[0,L(\gamma)]\to \wt M$ 
joining $\tilde p $ to any end point $x$ of $C_{\tilde p}$ in 
the domain $D(\alpha,\beta),$ 
$x$ is conjugate to $\tilde p$ along $\gamma$ and $\gamma$ is shorter than 
$\alpha$ and $\beta.$ 
\end{lemma}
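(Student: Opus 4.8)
The plan is to establish the two assertions separately, drawing only on the general structure theory of the cut locus of a complete surface recalled at the beginning of this section; no feature of the warped product beyond that structure seems to be needed.

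For the conjugacy I would first show that an end point $x$ admits a unique minimal geodesic from $\tilde p$. Two distinct minimal geodesics from $\tilde p$ to $x$ meet only at $\tilde p$ and $x$, so their union is a Jordan curve in $\wt M\cong R^2$; then $\wt M\setminus\Gamma_x$ would have at least two connected components, so $x$ would admit at least two sectors, contradicting that an end point admits exactly one. Given uniqueness, the classical characterization of cut points (\cite{ShT}, \cite{SST}) --- a cut point joined to $\tilde p$ by a single minimal geodesic must be conjugate to $\tilde p$ along it --- shows that $x$ is conjugate to $\tilde p$ along $\gamma$, and that $\gamma$ is the only such segment.

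For the length comparison, let $c:[0,a]\to C_{\tilde p}$ be the arc of the cut locus in $\overline{D(\alpha,\beta)}$ running from $y=c(0)$ to the end point $x=c(a)$, parametrized by arc length, and set $\rho(s):=\dist(\tilde p,c(s))$, so that $\rho(0)=L(\alpha)=L(\beta)$ and $\rho(a)=L(\gamma)$. It then suffices to prove that $\rho$ is strictly decreasing. For $s\in(0,a)$ there are exactly two minimal segments $\gamma_1^s,\gamma_2^s$ from $\tilde p$ to $c(s)$, with terminal unit velocities $T_1(s),T_2(s)$ pointing toward $c(s)$; by the first variation formula, $\rho'(s)=\langle T_i(s),c'(s)\rangle$ for $i=1,2$. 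These two geodesics bound a bigon $D_s$ with corners $\tilde p$ and $c(s)$, whose two sectors at $c(s)$ are $D_s$ and its exterior. Since $c(s)$ is reached from $y$ by moving into $D(\alpha,\beta)$, the sub-arc $c[0,s)$ (which contains $y$) lies in the exterior sector, so the sub-arc $c(s,a]$ heading to $x$ lies in $D_s$; near $c(s)$ the domain $D_s$ is the angular sector between the inward edge directions $-T_1(s),-T_2(s)$, and by the bisecting property recalled above $c'(s)$ is its bisector, i.e.\ $c'(s)=-(T_1(s)+T_2(s))/|T_1(s)+T_2(s)|$. Hence $\langle T_i(s),c'(s)\rangle=-(1+\langle T_1(s),T_2(s)\rangle)/|T_1(s)+T_2(s)|<0$ because $T_1(s)\ne T_2(s)$, so $\rho'<0$ and $L(\gamma)=\rho(a)<\rho(0)=L(\alpha)=L(\beta)$.

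The step I expect to be the main obstacle is the rigorous justification of the sign of $\rho'$, that is, the topological claim that past $c(s)$ the cut locus enters the bigon $D_s$ while $y$ stays outside it, so that $c'(s)$ is genuinely the inward bisector. I would secure this from the nesting $D_{s'}\subset D_s$ for $s'>s$, which follows from the tree structure of $C_{\tilde p}$ together with the fact that distinct minimal geodesics issuing from $\tilde p$ cannot cross except at their common endpoints. Once this inward-bisector sign is fixed the first-variation computation is routine; and should the path from $y$ to $x$ cross branch points of the tree, the same argument applies on each sub-arc, so $\rho$ remains strictly decreasing throughout.
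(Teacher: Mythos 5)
Your overall strategy coincides with the paper's: conjugacy at the end point via the structure of the set of minimal geodesics, and the length comparison via the first variation of $d(\tilde p,c(\cdot))$ along the cut locus arc together with the bisecting property. However, there is a genuine gap at precisely the step you flag as the main obstacle, and your proposed fix does not close it. The topological nesting $D_{s'}\subset D_s$ (equivalently, the tree structure plus non-crossing of minimal geodesics) only tells you \emph{which} sector the forward sub-arc $c(s,a]$ enters, namely the bigon $D_s$. It tells you nothing about the \emph{angle} of that sector at the corner $c(s)$. Your formula $c'(s)=-(T_1(s)+T_2(s))/|T_1(s)+T_2(s)|$ is the bisector of $D_s$ only if the interior angle $\theta_+(s)$ of $D_s$ at $c(s)$ is less than $\pi$; if $\theta_+(s)>\pi$, the arc still enters and bisects $D_s$, but the bisector is $+(T_1+T_2)/|T_1+T_2|$ and the first variation gives $\rho'(s)=-\cos(\theta_+(s)/2)>0$. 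Nothing purely topological forbids $\theta_+(s)\ge\pi$: for a general surface the distance function is not monotone along cut locus arcs, so some geometric input is unavoidable. This is exactly where the paper uses the Clairaut relation \eqref{cl} to show that the inner angle of the sector containing the end point is less than $\pi$ at every $c(\tau)$; that is the one non-formal ingredient of the proof, and your argument omits it.

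Two smaller points. First, your uniqueness claim for end points is slightly too strong as argued: if $\tilde p$ and $x$ were joined by a continuum of minimal geodesics, $\Gamma_x$ could fill the interior of the Jordan curve and its complement could be connected, so the two-sector contradiction need not arise; the correct statement (and the one the paper uses) is that the set of minimal segments to an end point is connected, which yields conjugacy whether or not the segment is unique. Second, your pointwise computation of $\rho'(s)$ presumes exactly two minimal geodesics and differentiability at every interior $s$; the paper instead observes that $d(\tilde p,c(\tau))$ is Lipschitz, applies the fundamental theorem of calculus for Lipschitz functions, and argues the sign of the derivative only almost everywhere. Your "handle branch points sub-arc by sub-arc" remark is in the right spirit but should be replaced by (or supplemented with) this almost-everywhere argument to be airtight.
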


\begin{proof}

Note that for any end point $x$ of the cut locus, the set of all minimal 
geodesic segments joining $\tilde p$ to $x$ is connected. Therefore,
 $x$ is conjugate to $\tilde p$ along any minimal geodesic segments 
 joining $\tilde p$ to the end point of the cut locus.
Let $\gamma:[0,L(\gamma)]\to \wt M$ denote any minimal geodesic segment 
 $\tilde p$ to an end point $x$ of 
$C_{\tilde p}\cap D(\alpha,\beta).$ We will prove that $\gamma$ is shorter 
than $\alpha$ and $\beta.$
It follows from Theorem B in \cite{ShT} or \cite{IT} that there exists 
a unit speed arc 
$c:[0,l]\to C_{\tilde p}$ joining the end point $x$ to  $y,$
where $y$ denotes the end point  of $\alpha$
distinct from $\tilde p.$
Since the function $d(\tilde p, c(\tau))$ is a Lipschitz function, 
it follows from Lemma 7.29 in \cite{WZ} that
the function is differentiable for almost all $\tau$ and
\begin{equation}\label{eq:4-1}
d(\tilde p,c(l))-d(\tilde p,y)=\int_0^l \frac{d}{d\tau}
 d(\tilde p,c(\tau))d\tau
\end{equation}
holds.
From the Clairaut relation \eqref{cl}, the inner angle $\theta(\tau)$ at $c(\tau )$ of 
the sector containing  $c[0,\tau)$
 is less than $\pi.$ Hence, by the first variation formula, we get
$$\frac{d}{d\tau}d(\tilde p,c(\tau))=\cos\frac{\theta(\tau)}{2}>0$$
for almost all $\tau.$ 
Notice that for each each $\tau\in(0,l),$ the curve $c$ bisects the sector at $c(\tau)$ containing $c[0,\tau).$
Therefore, from \eqref{eq:4-1},
$$L(\alpha)=L(\beta)=d(\tilde p,c(l))>d(\tilde p,y)=L(\gamma).$$
$\qedd$
\end{proof}

\begin{lemma}\label{lem4.3}

Let $q$ be a point on $\tilde \theta^{-1}(0)$ and $u_0$ any real number. 
Then $d(q,c(\theta))$ is strictly increasing on $[0,\infty)$. 
Here $c:[0,\infty)\to \wt M$ denotes $c(\theta)=(u_0,\theta)$ in 
the coordinates $(\tilde t,\tilde \theta)$ and $d(\cdot,\cdot)$ denotes 
the Riemannian distance function on $\wt M.$

\end{lemma}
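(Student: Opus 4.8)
The plan is to express the $\theta$-derivative of $g(\theta):=d(q,c(\theta))$ through the first variation formula and to read off its sign from the Clairaut relation. First I would record that $\wt M$ is complete, so that a unit speed minimal geodesic segment $\sigma:[0,L]\to\wt M$ from $q$ to $c(\theta_0)$ exists for every $\theta_0\ge 0$, and that $g$ is locally Lipschitz, being the composition of the $1$-Lipschitz function $d(q,\cdot)$ with the smooth curve $c$; in particular $g$ is absolutely continuous and differentiable for almost every $\theta$.

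The key step is to determine the sign of the Clairaut constant of every competing minimizer. Fix $\theta_0>0$ and let $\sigma$ be any minimal geodesic from $q$, where $\tilde\theta(q)=0$, to $c(\theta_0)=(u_0,\theta_0)$, with Clairaut constant $\nu$. Writing $\sigma(s)=(\tilde t(s),\tilde\theta(s))$, equation \eqref{eq:3.1} gives $\tilde\theta'(s)=\nu/m(\tilde t(s))^2$, so $\tilde\theta'$ has constant sign equal to that of $\nu$ and $\tilde\theta$ is monotone along $\sigma$. Since $\tilde\theta$ runs from $0$ at $q$ to $\theta_0>0$ at $c(\theta_0)$, its total change is positive, which forces $\nu>0$; the cases $\nu=0$ and $\nu<0$ would make $\tilde\theta$ constant or strictly decreasing along $\sigma$ and are therefore impossible. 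Thus \emph{every} minimal geodesic joining $q$ to a point $c(\theta_0)$ with $\theta_0>0$ has strictly positive Clairaut constant.

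Next I would apply the first variation formula to the endpoint variation $\theta\mapsto c(\theta)$, the other endpoint $q$ being held fixed. Since $\dot c(\theta_0)=\partial/\partial\tilde\theta$ has length $m(u_0)$ and $\eta(L)$ denotes the angle between $\dot\sigma(L)$ and $\partial/\partial\tilde\theta$, the Clairaut relation \eqref{cl} evaluated at the terminal level $\tilde t=u_0$ yields
\[
\langle \dot c(\theta_0),\dot\sigma(L)\rangle = m(u_0)\cos\eta(L)=\nu>0 .
\]
Hence at every $\theta_0>0$ where $g$ is differentiable one has $g'(\theta_0)=\nu>0$. Because $g$ is absolutely continuous, for $0<\theta_1<\theta_2$ I integrate, $g(\theta_2)-g(\theta_1)=\int_{\theta_1}^{\theta_2}g'(\theta)\,d\theta>0$, which gives strict monotonicity on $(0,\infty)$; continuity of $g$ at $0$ then upgrades this to strict monotonicity on all of $[0,\infty)$, since $g(0)=\lim_{\theta\to 0^+}g(\theta)\le g(\theta_1)<g(\theta_2)$.

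The main obstacle is the behavior at parameters where $c(\theta)$ is a cut point of $q$, so that several minimal geodesics compete and $g$ may fail to be differentiable. I would dispose of this in one of two equivalent ways: either rely on the absolute continuity argument above, which only needs $g'>0$ almost everywhere, or invoke the one-sided first variation formula, by which the one-sided derivatives of $g$ at $\theta_0$ are obtained by extremizing $\langle \dot c(\theta_0),\dot\sigma(L)\rangle=\nu$ over all competing minimal geodesics $\sigma$; since all of these Clairaut constants are positive when $\theta_0>0$, both one-sided derivatives are positive and strict monotonicity follows directly. I expect no curvature hypothesis to be needed for this particular statement: the argument uses only the warped product structure through \eqref{eq:3.1} and \eqref{cl}.
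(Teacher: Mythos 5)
Your argument is correct, but it follows a genuinely different route from the paper's. The paper proves the lemma synthetically, with no calculus at all: given $0<\theta_1<\theta_2$, it takes a minimal geodesic $\alpha_2$ from $q$ to $c(\theta_2)$, observes by the intermediate value theorem that $\alpha_2$ must cross the meridian $\tilde\theta=\theta_1$ at some point $z=\alpha_2(t_2)$, notes that $c(\theta_1)$ is the \emph{unique} nearest point to $z$ on the parallel $\tilde t=u_0$ (this uses only $ds^2\geq d\tilde t^2$, with equality forcing a meridian segment), so that $d(z,c(\theta_1))<d(z,c(\theta_2))$, and concludes by the triangle inequality $d(q,c(\theta_2))=d(q,z)+d(z,c(\theta_2))>d(q,z)+d(z,c(\theta_1))\geq d(q,c(\theta_1))$. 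You instead differentiate $g(\theta)=d(q,c(\theta))$: you show every minimizer to $c(\theta_0)$, $\theta_0>0$, has Clairaut constant $\nu>0$ (using that $\tilde\theta$ is monotone along a geodesic on the universal cover), identify $g'(\theta_0)=m(u_0)\cos\eta(L)=\nu$ via the first variation and the Clairaut relation, and integrate using the absolute continuity of the Lipschitz function $g$. Both are sound; the points you flag (nondifferentiability at cut parameters, handled either by the a.e.\ argument or by the one-sided first variation formula) are handled by standard facts about the superdifferentiability of $d(q,\cdot)$ along minimizers. The paper's route is more elementary and sidesteps all regularity questions about the distance function; your route is quantitative (it actually computes $g'=\nu$ a.e.) and is in fact the same Lipschitz-plus-first-variation technique the paper itself uses, with the reference to Lemma 7.29 of [WZ], in its other lemma on the monotonicity of $d(\tilde p,c(\tau))$ along an arc of the cut locus. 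Neither proof needs any curvature hypothesis, as you correctly note.
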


\begin{proof}
Choose any positive numbers $\theta_1<\theta_2.$ Let $\alpha_i, i=1,2,$
 denote minimal geodesic segments joining the point $q$ to $c(\theta_i)$ 
 respectively. Since $\theta_2>\theta_1,$  there exists  an intersection 
 $\alpha_2(t_2)$ of $\alpha_2$ and
the meridian $\tilde \theta=\theta_1.$ The point $c(\theta_1)$ is the 
unique nearest point on $\tilde t=u_0$ from $\alpha_2(t_2).$ Hence, 
$$d(\alpha_2(t_2),c(\theta_1))<d(\alpha_2(t_2),c(\theta_2)).$$
Therefore, by the triangle inequality,  we
get
$$d(q,c(\theta_2))=d(q,\alpha_2(t_2))+d(\alpha_2(t_2),c(\theta_2))>
d(q,\alpha_2(t_2))+d(\alpha_2(t_2),c(\theta_1))\geq d(q,c(\theta_1)).$$
This implies that $d(q,c(\theta))$ is strictly increasing on $[0,\infty).$
$\qedd$
\end{proof}

\begin{lemma}\label{lem4.4}
Suppose that  $\gamma:[0,L(\gamma)]\to \wt M$ is a minimal geodesic segment
 joining $\tilde p$ to an end point $x\in C_{\tilde p},$ which is a point 
 in the sector $D(\alpha,\beta)$ bounded by two minimal geodesic 
 segments $\alpha$ and $\beta$ emanating from $\tilde p.$
Then,  for any $s\in [0,L(\gamma)],$  
 $\tilde t(\alpha(s)) \geq\tilde t(\gamma(s)) \geq\tilde t(\beta(s))$ holds. 
Here we assume that 
\begin{equation*}
\angle 
( \alpha'(0) , ( {\partial}/{\partial \tilde t})_{\tilde p })
<\angle 
( \gamma'(0) , ( {\partial}/{\partial \tilde  t} )_{\tilde p })
<\angle 
( \beta'(0) , ( {\partial}/{\partial\tilde  t})_{\tilde p} ),
\end{equation*}
where  $\angle(\cdot,\cdot)$ denotes the angle made by two tangent
 vectors. 
\end{lemma}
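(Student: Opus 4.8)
The plan is to deduce the chain of inequalities from a single monotonicity statement: that the height $\tilde t(\gamma_\phi(s))$ is non‑increasing in the initial angle $\phi:=\angle(\gamma_\phi'(0),(\partial/\partial\tilde t)_{\tilde p})$, where $\gamma_\phi$ is the unit‑speed geodesic issuing from $\tilde p$ whose initial velocity makes angle $\phi$ with $\partial/\partial\tilde t$, the rotation being taken in the sense common to the sector. Since $\alpha,\gamma,\beta$ are the members with $\phi_\alpha<\phi_\gamma<\phi_\beta$, the asserted $\tilde t(\alpha(s))\ge\tilde t(\gamma(s))\ge\tilde t(\beta(s))$ then follows at once by integrating $\partial_\phi\tilde t$ over $[\phi_\alpha,\phi_\gamma]$ and over $[\phi_\gamma,\phi_\beta]$.

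First I would set up the geodesic variation $V_\phi(s):=\partial_\phi\gamma_\phi(s)$. Because all the $\gamma_\phi$ emanate from the single point $\tilde p$ with unit speed, $V_\phi$ is a Jacobi field along $\gamma_\phi$ with $V_\phi(0)=0$, and $\tfrac{d}{ds}\langle V_\phi,\gamma_\phi'\rangle=\tfrac12\partial_\phi\langle\gamma_\phi',\gamma_\phi'\rangle=0$ forces $\langle V_\phi,\gamma_\phi'\rangle\equiv 0$. Hence $V_\phi$ is purely normal, $V_\phi(s)=f_\phi(s)N_\phi(s)$, where $N_\phi$ is the parallel unit normal along $\gamma_\phi$ and $f_\phi$ solves the scalar Jacobi equation with $f_\phi(0)=0$, $f_\phi'(0)=1$; in particular $f_\phi>0$ up to the first conjugate distance.

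The key computation is the $\partial/\partial\tilde t$–component. Since $\partial/\partial\tilde t$ is a unit field orthogonal to $\partial/\partial\tilde\theta$, we have $\partial_\phi\tilde t(\gamma_\phi(s))=\langle V_\phi(s),\partial/\partial\tilde t\rangle=f_\phi(s)\langle N_\phi(s),\partial/\partial\tilde t\rangle$. Writing $\gamma_\phi'(s)$ in the orthonormal frame $\{\partial/\partial\tilde t,\,m^{-1}\partial/\partial\tilde\theta\}$ and using the Clairaut relation \eqref{cl}, the $\partial/\partial\tilde t$–component of the normal $N_\phi$ works out to $-|\nu|/m(\tilde t(\gamma_\phi(s)))$, where $\nu$ is the (constant) Clairaut constant of $\gamma_\phi$; a short check shows this sign is the same for either sense of rotation. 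Therefore
\[
\frac{\partial}{\partial\phi}\,\tilde t(\gamma_\phi(s))=-\,f_\phi(s)\,\frac{|\nu(\phi)|}{m\bigl(\tilde t(\gamma_\phi(s))\bigr)}\le 0
\]
wherever $f_\phi\ge 0$. This is precisely what makes the argument clean: although the Clairaut constant itself need not vary monotonically with $\phi$, only its sign enters, so the height is monotone in $\phi$ regardless. (A direction with $\nu=0$, if present, is a meridian and contributes $\partial_\phi\tilde t=0$, so it does not disturb the monotonicity.)

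The one genuine obstacle is to guarantee $f_\phi(s)\ge 0$ for every $\phi\in[\phi_\alpha,\phi_\beta]$ and every $s\in[0,L(\gamma)]$, i.e. the uniform absence of interior conjugate points across the whole family. I would resolve this from the structure of the sector established above: $\alpha$ and $\beta$ are minimal of length $L(\alpha)=L(\beta)>L(\gamma)$, hence conjugate‑free on $[0,L(\gamma)]$, while along the cut‑locus arc $c$ joining $x$ to $y$ the distance $d(\tilde p,\cdot)$ is strictly increasing by the first variation formula, so every cut point inside $D(\alpha,\beta)$ lies at distance at least $L(\gamma)$. Since distinct geodesics from $\tilde p$ cannot meet before a conjugate point, each intermediate $\gamma_\phi$ stays in $D(\alpha,\beta)$ and reaches its cut point at distance $\ge L(\gamma)$; thus $\gamma_\phi$ is minimal, a fortiori conjugate‑free, up to parameter $L(\gamma)$, giving $f_\phi\ge 0$ there. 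Integrating the displayed inequality in $\phi$ then yields $\tilde t(\alpha(s))\ge\tilde t(\gamma(s))\ge\tilde t(\beta(s))$ for all $s\in[0,L(\gamma)]$, which is the claim.
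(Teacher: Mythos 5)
Your route is genuinely different from the paper's: you aim at the stronger statement that $\tilde t(\gamma_\phi(s))$ is monotone in the initial angle $\phi$ across the whole fan of geodesics filling $D(\alpha,\beta)$, via the Jacobi field $V_\phi=f_\phi N_\phi$ and the identity $\partial_\phi\tilde t(\gamma_\phi(s))=-f_\phi(s)\,|\nu(\phi)|/m$. The paper instead compares only the three geodesics $\alpha,\gamma,\beta$ by a first-crossing argument: \eqref{eq:3.3} gives the strict ordering of $\tilde t$ for small $s>0$; if the ordering first failed at some $s_0<L(\gamma)$, two of the three minimizing segments would reach the same parallel $\tilde t=u_0$ at the same distance $s_0$ from $\tilde p$, and the strict monotonicity of $\theta\mapsto d(\tilde p,(u_0,\theta))$ (the preceding lemma on distances to a parallel) forces them to coincide there, i.e.\ two distinct minimizers meet at a point interior to both --- a contradiction. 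That argument uses no conjugate-point information at all.

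The gap in your proof is precisely the step you single out: you do not establish $f_\phi\ge 0$ on $[0,L(\gamma)]$ for every $\phi\in[\phi_\alpha,\phi_\beta]$. Your justification rests on the claim that every cut point in $D(\alpha,\beta)$ lies at distance at least $L(\gamma)$ from $\tilde p$, but the paper's lemma on end points only bounds their distances \emph{above} by $L(\alpha)=L(\beta)$; since $x$ is an arbitrary end point rather than the nearest one, the sector may contain another end point $x'$ with $d(\tilde p,x')<L(\gamma)$. By that same lemma $x'$ is conjugate to $\tilde p$, so the corresponding $f_{\phi'}$ vanishes at parameter $d(\tilde p,x')<L(\gamma)$ and changes sign there, and your pointwise inequality $\partial_\phi\tilde t\le 0$ fails on part of $[0,L(\gamma)]$, which breaks the integration over $\phi$. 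The auxiliary principle you invoke --- that distinct geodesics from $\tilde p$ cannot meet before a conjugate point --- is not valid (the second intersections produced in Lemma 5.1 of the paper occur whether or not conjugate points are present), and the assertion that each intermediate $\gamma_\phi$ stays in $D(\alpha,\beta)$ is essentially the statement under proof. As written the argument is circular or incomplete at this step; the clean repair is to abandon the integration over the full family and compare the three given geodesics directly, as the paper does.
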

\begin{proof}
From \eqref{eq:3.3}, it follows that for sufficiently small $s>0,$ 
$\tilde t(\alpha(s))>\tilde t(\gamma(s))>\tilde t(\beta(s))$ holds.
 Hence the set
$A:=\{s\in(0,L(\gamma))\: |\:  \tilde t(\alpha(s))>\tilde t(\gamma(s))>
\tilde t(\beta(s))\}$ 
 is a nonempty  open subset of $(0,L(\gamma)).$ 
Let $(0,s_0)$ denote the connected component of $A.$
It is sufficient to prove that $s_0=L(\gamma).$ Suppose that $s_0<L(\gamma).$
Thus, 
$\tilde t(\alpha(s_0))=\tilde t(\gamma(s_0))$ or
 $\tilde t(\gamma(s_0))=\tilde t(\beta(s_0))$ holds, since $A$ is open. 
By  applying Lemma \ref{lem4.3} for $u_0:=\tilde t(\alpha(s_0))$ and $\tilde t(\beta(s_0)),$ we get $\alpha(s_0)=\gamma(s_0)$ or $\gamma(s_0)=\beta(s_0),$
 which is a contradiction. 
$\qedd$
\end{proof}



\begin{lemma}\label{lem4.5} 

For any point  $\tilde p\in \tilde t^{-1}(-\infty,0],$ there does not exist a cut 
point of $\tilde p$
 in $\tilde t^{-1}(-\infty,0)$. 
In particular, the cut locus of $\tilde p$ is a subset of $\tilde t^{-1}(0)$ if $\tilde t(\tilde p)=0.$
This implies that the cut locus  $C_p$ of a point $p\in t^{-1}(0)$  is a subset of 
$\theta^{-1}(\pi)\cup t^{-1}(0)$. Here the coordinates $(t,\theta)$ are chosen so as to satisfy $\theta(p)=0.$

\end{lemma}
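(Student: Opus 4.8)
The plan is to argue by contradiction, leaning on Lemmas \ref{lem4.1} and \ref{lem4.4} together with the two Lemmas of this section on the conjugacy of end points and on the monotonicity of the distance to a parallel. Suppose that $\tilde p$, with $\tilde t(\tilde p)\le 0$, admits a cut point in $\tilde t^{-1}(-\infty,0)$. By Lemma \ref{lem4.1} I would produce two minimal geodesic segments $\alpha,\beta$ from $\tilde p$ bounding a sector $D(\alpha,\beta)\subset\tilde t^{-1}(-\infty,0)$ that contains an end point $x$ of $C_{\tilde p}$. Fixing a minimal geodesic $\gamma$ from $\tilde p$ to $x$, the end-point lemma gives that $x$ is conjugate to $\tilde p$ along $\gamma$, while Lemma \ref{lem4.4} sandwiches $\gamma$ in the $\tilde t$-coordinate between $\alpha$ and $\beta$; since the closure of $D(\alpha,\beta)$ lies in $\tilde t^{-1}(-\infty,0]$, this forces $\tilde t(\gamma(s))\le 0$ for every $s$. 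Thus I will have a minimal geodesic confined to the closed lower half that nonetheless carries a conjugate point strictly below the equator.

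The crux is to show this is impossible. My plan is to analyze the normal Jacobi field $y$ along $\gamma$ vanishing at $\tilde p$, governed by $y''+K(\tilde t(s))\,y=0$, using the curvature hypothesis of this section: $K$ is increasing on $(-\infty,0]$ with $K(0)>0$, and (by Lemmas \ref{lem2.1} and \ref{lem2.2}, read through the reflective symmetry) $K$ changes sign at most once, being nonpositive on $(-\infty,-t_1]$ for some $t_1>0$. The meridians themselves carry no conjugate point, since the normal Jacobi field along a meridian is $y(s)=m(\tilde t(s))\int_0^s m(\tilde t(\sigma))^{-2}\,d\sigma$, which is positive for $s>0$ because $m>0$; hence the offending $\gamma$ must have nonzero Clairaut constant and dip into $\tilde t<0$. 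I would then prove that for any geodesic issuing from $\tilde p\in\tilde t^{-1}(-\infty,0]$ and remaining in $\tilde t\le 0$, the first conjugate point is attained, if at all, only at a parameter where $\tilde t=0$. Either $\gamma$ never reaches $\tilde t=0$, so it carries no conjugate point at all, or its first conjugate point sits on the equator; in both cases the conjugacy of $x\in\tilde t^{-1}(-\infty,0)$ is contradicted, completing the argument.

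Finally I would deduce the two consequences. If $\tilde t(\tilde p)=0$, the reflection $\sigma(\tilde t,\tilde\theta)=(-\tilde t,\tilde\theta)$ is an isometry of $\wt M$ fixing $\tilde p$, so $C_{\tilde p}$ is symmetric about $\tilde t=0$; since it meets neither $\tilde t<0$ (just proved) nor, by symmetry, $\tilde t>0$, it lies in $\tilde t^{-1}(0)$. Passing to $M$, I would use the covering $\pi:\wt M\to M$ whose deck group is generated by $\tilde\theta\mapsto\tilde\theta+2\pi$, and lift $p\in t^{-1}(0)$ to $\tilde p\in\tilde t^{-1}(0)$. A point $q$ with $t(q)\ne 0$ can be a cut point of $p$ only through two distinct minimal geodesics: if these lift to the same lift of $q$ or realize conjugacy, the relevant lift would be a cut point of $\tilde p$ off $\tilde t=0$, which is excluded; so they must run to two distinct deck-translates of $q$ at equal minimal distance from $\tilde p$. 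By the reflective symmetry of $M$ about the meridian $\theta^{-1}(0)$ through $p$, this equidistance locus is the opposite meridian $\theta^{-1}(\pi)$. Hence $C_p\subset\theta^{-1}(\pi)\cup t^{-1}(0)$.

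I expect the main obstacle to be the conjugate-point step of the second paragraph: a soft curvature-sign argument is not enough, because the positive-curvature band $(-t_1,0]$ near the equator genuinely focuses geodesics, so one cannot simply say that a geodesic in $\tilde t\le 0$ never meets positive curvature. The hard part will be to locate the first conjugate point precisely at $\tilde t=0$ rather than below it, for which I anticipate needing a Sturm-type comparison against the meridian Jacobi field, or the explicit dependence of the first-conjugate-point position on the Clairaut constant $\nu$ tied to the decline of $\varphi(\nu)$ developed in this section.
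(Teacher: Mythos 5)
Your setup coincides with the paper's through the sandwich step: contradiction hypothesis, Lemma \ref{lem4.1} to get $\alpha,\beta$ and an end point $x$ in $D(\alpha,\beta)\subset\tilde t^{-1}(-\infty,0)$, conjugacy of $x$ along $\gamma$, and Lemma \ref{lem4.4} to confine $\gamma$ between $\alpha$ and $\beta$ in the $\tilde t$-coordinate. But at the decisive moment you diverge onto an unproven claim, and you say so yourself: you ``would then prove'' that a geodesic from $\tilde p$ remaining in $\tilde t\le 0$ has its first conjugate point only on the equator, and your last paragraph concedes that this is exactly the step you do not know how to carry out. That is a genuine gap, and it is a harder statement than what is needed. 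The paper never locates conjugate points of $\gamma$ intrinsically. Instead it uses the sandwich $0\ge\tilde t(\alpha(s))\ge\tilde t(\gamma(s))\ge\tilde t(\beta(s))$ together with the monotonicity of $G$ on the lower half meridians to get the pointwise inequality $G(\alpha(s))\ge G(\gamma(s))$, and then applies the Rauch comparison theorem to the pair $\alpha|_{[0,L(\gamma)]}$ and $\gamma$: since $\gamma$ has a conjugate point at its endpoint and $\alpha$ has everywhere larger curvature at corresponding parameters, $\alpha$ acquires a conjugate point at some parameter $\le L(\gamma)$, and since $L(\gamma)<L(\alpha)$ (this is the content of the first Lemma \ref{lem4.3}, via the first variation argument along the cut locus arc from $x$ to $y$), this conjugate point is interior to $\alpha$, contradicting minimality of $\alpha$. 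Your proposal uses the sandwich only to conclude $\tilde t(\gamma(s))\le 0$, which discards the comparison with $\alpha$ that is the entire purpose of Lemma \ref{lem4.4}; without it you are left needing a direct bound on the position of the first conjugate point of an arbitrary geodesic in the lower half, and your suggested Sturm comparison against the meridian Jacobi field does not obviously close this, since the curvature along $\gamma$ need not be dominated by the curvature along the meridian at corresponding arc-length parameters.

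The remaining parts of your argument are acceptable: the reflection symmetry argument for $\tilde t(\tilde p)=0$ matches the paper, and your descent to $M$ via the deck group and the reflective symmetry about $\theta^{-1}(0)$ is a reasonable (if more detailed) expansion of the paper's one-line deduction that $C_p\subset\theta^{-1}(\pi)\cup t^{-1}(0)$. To repair the proof, replace your second paragraph by the Rauch comparison between $\gamma$ and $\alpha|_{[0,L(\gamma)]}$, for which you need both conclusions of the first Lemma \ref{lem4.3} (conjugacy at the end point \emph{and} the strict length inequality $L(\gamma)<L(\alpha)=L(\beta)$).
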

\begin{proof}
Suppose that  there exist a cut point of $\tilde p$ in 
$\tilde t^{-1}(-\infty, 0).$
By Lemma \ref{lem4.1}, 
there exist two minimal geodesic segments $\alpha$ and $\beta$ joining 
 a cut point 
$y$ of $\tilde p$ which bound a sector $D(\alpha,\beta)$ containing  
an end point $x$ of $C_{\tilde p} .$ Let 
 $\gamma :[0,L(\gamma)]\to \wt M$ be a unit speed geodesic segment joining 
 $\tilde p$ to the end point
$x.$
From Lemmas \ref{lem4.1} and  \ref{lem4.4},  it follows that for any $s\in [0,L(\gamma)]$,
\begin{equation*}
0\geq\tilde t(\alpha(s)) \geq \tilde t(\gamma(s)) \geq \tilde t(\beta(s))
\end{equation*}
holds. 
Since the Gaussian curvature $G$ is increasing on each lower half meridian,
we obtain
\begin{equation*}
G(\alpha(s)) \geq G(\gamma(s)) \geq G(\beta(s)).
\end{equation*}
By applying the Rauch comparison theorem for the pair of  geodesic segments $\alpha|_{[0,L(\gamma)]}$ and $\gamma,$ $\tilde p$ admits a conjugate point on 
$\alpha|_{[0,L(\gamma)]}$ along $\alpha$.

This contradicts the fact that $\alpha$ is minimal.  Since $\wt M$ is symmetric with respect to $\tilde t=0,$  the cut locus of $\tilde p$ is a subset of $\tilde t^{-1}(0),$ if
 $\tilde t(\tilde p)=0.$
This implies that $C_p\subset \theta^{-1}(\pi)\cup t^{-1}(0)$ for the point $p= t^{-1}(0)\cap\theta^{-1}(0).$
$\qedd$
\end{proof}

\begin{proposition}\label{prop4.6}
Let $M$ be a complete Riemannian manifold $R^1 \times S^1$ with 
a warped product metric $ds^2=dt^2+m(t)^2d\theta^2$ of the real line
 $(R^1,dt^2)$ and the unit circle $(S^1,d\theta^2)$. 
 Here the warping function $m:R \to (0,\infty)$ is a smooth even 
 function. If  the Gaussian curvature is positive on the equator and 
 decreasing on the  upper half meridian 
 $t^{-1}(0,\infty)\cap \theta^{-1}(0),$
then the function $\varphi(\nu)$ 
  is decreasing on $(\inf m,m(0))$.
 
\end{proposition}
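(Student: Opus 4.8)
The plan is to show that $\varphi'(\nu)<0$ for every $\nu\in(\inf m,m(0))$. Since the curvature is positive at $t=0$ and decreasing, Lemmas \ref{lem2.1} and \ref{lem2.2} guarantee that $m$ is strictly decreasing on $[0,\xi(\nu)]$, so $t\mapsto m(t)$ is invertible there and $K$ may be regarded as a function of $\sigma:=m^2$ which, because $m$ decreases in $t$, is \emph{increasing} in $\sigma$ on $[\nu^2,m(0)^2]$. First I would remove the $\nu$-dependent, singular endpoint of \eqref{eq:3.6} by the substitution $m^2=\nu^2\cos^2\psi+m(0)^2\sin^2\psi$, $\psi\in[0,\pi/2]$, which sends the tangency point $m=\nu$ to $\psi=0$ and the equator $m=m(0)$ to $\psi=\pi/2$. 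A short computation turns \eqref{eq:3.6} into $\varphi(\nu)=2\int_0^{\pi/2}H(\psi,\nu)\,d\psi$ with $H=\dfrac{\nu R\cos\psi}{\sigma\,|m'|}$, where $R:=\sqrt{m(0)^2-\nu^2}$ and $\sigma=\nu^2\cos^2\psi+m(0)^2\sin^2\psi$. The point is that both limits are now independent of $\nu$; moreover the factor $R\cos\psi=\sqrt{m(0)^2-m^2}$ vanishes at $\psi=\pi/2$ at exactly the rate needed to cancel the zero of $|m'|$ coming from $m'(0)=0$, so $H$ is continuous on all of $[0,\pi/2]$ and differentiation under the integral sign is legitimate.

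Next I would eliminate $|m'|$ in favour of curvature by the first integral of $m''+Km=0$: multiplying by $m'$ and integrating from $0$ (using $m'(0)=0$) gives $(m'(t))^2=\int_{m^2}^{m(0)^2}K\,d\sigma$, i.e. $|m'|=\sqrt{E(\sigma)}$ with $E(\sigma):=\int_\sigma^{m(0)^2}K\,d\sigma'$ and $E'(\sigma)=-K(\sigma)$. Differentiating $\ln H$ in $\nu$ at fixed $\psi$ (using $\partial_\nu\sigma=2\nu\cos^2\psi$) yields $\partial_\nu\ln H=\frac1\nu-\frac{\nu}{R^2}-\frac{2\nu\cos^2\psi}{\sigma}+\frac{K\nu\cos^2\psi}{E}$. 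Here the hypothesis enters: since $K$ is increasing in $\sigma$, for $\sigma'\in[\sigma,m(0)^2]$ we have $K(\sigma')\ge K(\sigma)$, whence $E(\sigma)\ge K(\sigma)(m(0)^2-\sigma)=K(\sigma)R^2\cos^2\psi$ and therefore $\frac{K\cos^2\psi}{E}\le\frac1{R^2}$. Feeding this into the previous display and using $H>0$ gives the pointwise bound $\partial_\nu H\le H\,B_1$ with $B_1:=\frac1\nu-\frac{2\nu\cos^2\psi}{\sigma}$, so that $\varphi'(\nu)\le2\int_0^{\pi/2}H\,B_1\,d\psi$.

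It remains to show that this last integral is $\le0$. The key auxiliary fact is that in the constant-curvature case $\varphi$ is independent of $\nu$ (indeed $\varphi\equiv\pi/(m(0)\sqrt{K})$), which is equivalent to the identity $\int_0^{\pi/2}\sigma^{-1}B_1\,d\psi=0$; I would verify this directly from $\int_0^{\pi/2}\sigma^{-1}d\psi=\pi/(2\nu m(0))$ by differentiating in $\nu^2$. Writing $H=g(\psi)/\sigma$ with $g(\psi):=\nu R\cos\psi/\sqrt{E}=\nu/\sqrt{\overline{K}(\sigma)}$, where $\overline{K}(\sigma)$ is the mean of $K$ over $[\sigma,m(0)^2]$, one checks that $\overline{K}$ is increasing in $\sigma$ (removing the small-$K$ bottom of the window raises the mean), hence $g$ is decreasing in $\psi$; while $B_1$ is negative on $[0,\psi_*)$ and positive on $(\psi_*,\pi/2]$ for $\psi_*=\arctan(\nu/m(0))$. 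Then $\int_0^{\pi/2}H B_1\,d\psi=\int_0^{\pi/2}g\,(\sigma^{-1}B_1)\,d\psi=\int_0^{\pi/2}\bigl(g-g(\psi_*)\bigr)\sigma^{-1}B_1\,d\psi$, the constant term having dropped out by the vanishing reference integral; each factor $\bigl(g-g(\psi_*)\bigr)\sigma^{-1}B_1$ is $\le0$ because $g-g(\psi_*)$ and $\sigma^{-1}B_1$ change sign oppositely across $\psi_*$. This gives $\varphi'(\nu)\le0$, with strict inequality since for a cylinder $K$ cannot be constant by Lemma \ref{lem2.1}.

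The main obstacle is this final sign determination: the pointwise curvature inequality alone does not suffice, because the ``measure'' $H\,d\psi$ against which $B_1$ is integrated itself depends on the curvature. The decisive ingredients are that the constant-curvature model contributes exactly zero (so it may be subtracted freely) and that $K$ decreasing makes the weight $g$ monotone in $\psi$, which is precisely what a Chebyshev-type rearrangement requires. A secondary technical point, to be handled first, is justifying the passage to the fixed-limit form and the differentiation under the integral sign, i.e. checking that the apparent singularities of the integrand at $\psi=0$ and $\psi=\pi/2$ are removable.
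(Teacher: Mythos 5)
Your argument is correct in substance, but it takes a genuinely different route from the paper. The paper's proof is soft and geometric: it invokes Lemma \ref{lem4.5} (no cut point of an equatorial point $\tilde p$ lies in $\tilde t^{-1}(-\infty,0)$, proved via Rauch comparison) to conclude that for $\nu_1<\nu_2$ the geodesic $\alpha_{\nu_1}$, which leaves the equator more steeply, cannot cross $\alpha_{\nu_2}$ before both return to $\tilde t=0$ --- a crossing would produce a cut point below the equator --- so the return angles are ordered and $\varphi(\nu_1)\geq\varphi(\nu_2)$. You instead compute $\varphi'(\nu)$ directly: the substitution $m^2=\nu^2\cos^2\psi+m(0)^2\sin^2\psi$ fixing the limits, the first integral $(m')^2=\int_{m^2}^{m(0)^2}K\,d\sigma$, the pointwise estimate from monotonicity of $K$ in $\sigma$, and the Chebyshev-type rearrangement against the constant-curvature reference integral $\int_0^{\pi/2}\sigma^{-1}B_1\,d\psi=0$ (which I checked equals $\frac{1}{\nu}F+F'$ for $F=\pi/(2\nu m(0))$, hence vanishes). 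All the key steps verify: $E\geq K(\sigma)(m(0)^2-\sigma)$ gives the bound $\partial_\nu H\leq HB_1$ even where $K\leq 0$, the mean $\overline{K}$ is increasing in $\sigma$ so $g$ is monotone in $\psi$, and $B_1$ changes sign once at $\psi_*=\arctan(\nu/m(0))$. This is essentially the analytic method of Sinclair--Tanaka \cite{ST} for two-spheres of revolution; it is self-contained (no cut-locus structure theory, no Rauch) and yields quantitative information about $\varphi'$, at the cost of the regularity bookkeeping you flag, whereas the paper's argument is shorter given that Lemma \ref{lem4.5} is needed anyway and ties the monotonicity conceptually to the location of the cut locus.

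One small overreach: your final claim of \emph{strict} inequality does not follow from ``$K$ cannot be globally constant,'' since $K$ could be constant on $[0,\xi(\nu)]$ for $\nu$ near $m(0)$, in which case your chain of inequalities degenerates to equalities and gives only $\varphi'(\nu)=0$ there. This is harmless: the proposition (and the paper's own proof, which concludes only $\varphi(\nu_1)\geq\varphi(\nu_2)$) asserts monotonicity in the non-strict sense, which your weak inequality already delivers.
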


\begin{proof}
Let $\wt M:=(R^1\times R^1, d\tilde t^2+m(\tilde t)^2d\tilde\theta^2)$ 
denote the universal covering space of $M.$
Choose any point $\tilde p$ on $\tilde t^{-1}(0).$
For each $\nu \in(\inf m,m(0)),$
let $\alpha_\nu:[0,\infty) \to \wt M$ denote the geodesic emanating 
from the point $\tilde p=\alpha_\nu (0)$  with Clairaut constant 
$\nu$ and with
$(\tilde t\circ\alpha_{\nu})'(0)<0.$
From the Clairaut relation, we get
$\angle (({\partial}/{\partial 
\tilde\theta})_{\tilde p} , \alpha'_\nu(0))=
\cos^{-1} {\nu}/{m(0)}$.
Choose any $\nu_1<\nu_2$ with $\nu_1,\nu_2\in(\inf m,m(0)).$
Since $$\cos^{-1}\frac{\nu_2}{m(0)}<\cos^{-1}\frac{\nu_1}{m(0)},$$ 
it follows from Lemma \ref{lem4.5} that $\alpha_{\nu_1}$ 
does not cross the domain bounded by 
the subarc of $\alpha_{\nu_2}$ and $\tilde t^{-1}(0)\cap\tilde\theta^{-1} 
[\tilde\theta(\tilde p),
\tilde\theta(\tilde p)+\varphi({\nu_2})]$.
 This implies that $\varphi(\nu_1)\geq \varphi(\nu_2).$
 Therefore, $\varphi(\nu)$ is decreasing on $(\inf m,m(0))$. 
$\qedd$
\end{proof}

\section{The cut locus of a point on $\wt M$}
 Choose any point $q$ on $\wt M$ with $-t_0<\tilde t(q)<0,$ where 
$t_0:=\sup\{\: t>0\: |  \:  m'(t)<0\}.$
Without loss of generality, we may assume 
 that $\tilde\theta(q)=0$. 
 We consider two geodesics $\alpha_\nu$ and $\beta_\nu$ emanating 
 from the point $q=\alpha_\nu(0)=\beta_\nu(0)$ with  Clairaut constant $\nu>0$. 
 Here we assume that
 \begin{equation*}
 \angle (({\partial}/{\partial \tilde t})_q,\alpha'_\nu (0)) >
 \angle (({\partial}/{\partial\tilde  t})_q,\beta'_\nu (0)).
 \end{equation*}

\begin{lemma}\label{lem5.1}
The two geodesics $\alpha_{\nu}$ and $\beta_\nu$ intersect again at the point $(\tilde t,\tilde \theta)^{-1}(u,\varphi(\nu) )$
if $\nu\in(\inf m,m(0)),$ where $u:=-\tilde t(q).$

\end{lemma}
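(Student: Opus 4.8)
The plan is to follow the two geodesics until each first returns to the parallel $\tilde t=u$ (where $u=-\tilde t(q)>0$ is the reflection of the height of $q$ across the equator) and to check that both arrive with the same $\tilde\theta$-coordinate, namely $\varphi(\nu)$. Since $(\tilde t,\tilde\theta)$ form a global coordinate system on $\wt M$, agreement of both coordinates means that the geodesics actually meet at the single point $(\tilde t,\tilde\theta)^{-1}(u,\varphi(\nu))$.

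First I would pin down the orientations. As $\nu>0$, the Clairaut relation \eqref{eq:3.1} gives $\tilde\theta'=\nu/m^2>0$, so both geodesics travel toward increasing $\tilde\theta$. The hypothesis $\angle((\partial/\partial\tilde t)_q,\alpha'_\nu(0))>\angle((\partial/\partial\tilde t)_q,\beta'_\nu(0))$ together with \eqref{eq:3.3} forces $(\tilde t\circ\alpha_\nu)'(0)<0<(\tilde t\circ\beta_\nu)'(0)$ with equal $|\tilde t'(0)|$; thus $\alpha_\nu$ starts downward and $\beta_\nu$ upward. Existence of two distinct such geodesics at $q$ requires $\nu<m(\tilde t(q))$, and since $m$ is even and decreasing on $(0,t_0)$ while $m(\tilde t(q))=m(u)>\nu=m(\xi(\nu))$, this gives $u<\xi(\nu)$. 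By \eqref{eq:3.3} each geodesic is then monotone in $\tilde t$ until it becomes tangent to a parallel on which $m=\nu$: $\beta_\nu$ ascends to its apex on $\tilde t=\xi(\nu)$ and $\alpha_\nu$ descends to its apex on $\tilde t=-\xi(\nu)$, after which $\tilde t$ reverses sign and each crosses $\tilde t=u$.

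Next I would integrate \eqref{eq:3.4} piecewise, inserting the sign $\varepsilon(\tilde t')$ on each monotone phase. Writing $I(a,b):=\int_a^b \nu\,(m\sqrt{m^2-\nu^2})^{-1}\,dt$, the ascent of $\beta_\nu$ contributes $I(-u,\xi(\nu))$ and its descent down to $\tilde t=u$ contributes $I(u,\xi(\nu))$, while the descent of $\alpha_\nu$ contributes $I(-\xi(\nu),-u)$ and its subsequent ascent up to $\tilde t=u$ contributes $I(-\xi(\nu),u)$. The evenness of $m$ yields $I(-a,0)=I(0,a)$ and $I(-b,-a)=I(a,b)$, so after substitution both $\tilde\theta$-totals collapse to $2I(0,\xi(\nu))=\varphi(\nu)$ by the definition \eqref{eq:3.6}. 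Hence each of $\alpha_\nu$ and $\beta_\nu$ passes through $(\tilde t,\tilde\theta)^{-1}(u,\varphi(\nu))$, which proves the claim.

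Conceptually this is governed by the isometry $\Phi(\tilde t,\tilde\theta)=(-\tilde t,\varphi(\nu)-\tilde\theta)$ of $\wt M$ --- the composition of the reflection $\tilde t\mapsto-\tilde t$ (an isometry because $m$ is even) with a reflection--translation in $\tilde\theta$ --- which interchanges $q$ with the claimed meeting point and carries $\alpha_\nu$ onto $\beta_\nu$ traversed backward; invoking $\Phi$ would shorten the bookkeeping considerably. The only genuinely delicate points are the sign management of $\varepsilon(\tilde t')$ across the apex and the verification that $u<\xi(\nu)$, so that both geodesics really do reach $\tilde t=u$ after turning around. Once these are settled, the value $\tilde\theta=\varphi(\nu)$ drops out immediately from \eqref{eq:3.6} and the evenness of $m$.
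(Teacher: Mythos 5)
Your proof is correct and follows essentially the same route as the paper's: integrate \eqref{eq:3.4} over the monotone phases of each geodesic (down to $\tilde t=-\xi(\nu)$ and back up to $\tilde t=u$ for $\alpha_\nu$, up to $\tilde t=\xi(\nu)$ and back down for $\beta_\nu$) and use the evenness of $m$ to collapse both $\tilde\theta$-totals to $2\int_0^{\xi(\nu)}\nu\,(m\sqrt{m^2-\nu^2})^{-1}dt=\varphi(\nu)$. Your additional care with the orientations and with the verification that $u<\xi(\nu)$ (equivalently $\nu<m(u)$, which the existence of the two distinct geodesics implicitly presupposes even though the lemma states the range $\nu\in(\inf m,m(0))$) only makes the argument more complete than the paper's own writeup.
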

\begin{proof}
Suppose that $\nu\in(\inf m,m(0)).$ 
Since $\alpha_\nu$ is tangent to the  parallel arc $\tilde t=-\xi(\nu)$, it follows 
from \eqref{eq:3.4} that
\begin{equation*}
 \tilde\theta(\alpha_\nu(s_1))-\tilde\theta(\alpha_\nu(0))=
 \int_{-\xi(\nu)}^{-u} \dfrac{\nu}{m\sqrt{m^2-\nu^2}}dt,
 \end{equation*}
where $s_1:=\min \{ s>0\: |\: \tilde t(\alpha_\nu(s))=-\xi(\nu) \} $,
and \begin{equation*}
\tilde \theta(\alpha_\nu(s_2))-\tilde\theta(\alpha_\nu(s_1))=
 \int_{-\xi(\nu)}^{u} \dfrac{\nu}{m\sqrt{m^2-\nu^2}}dt,
 \end{equation*}
 where $s_2:=\min \{  s>0\: |\: \tilde t(\alpha_\nu(s))=u\} $. 
Hence, we obtain,
\begin{equation}\label{eq:5.1}
\tilde \theta(\alpha_\nu(s_2))-\tilde\theta(\alpha_\nu(0))=
 \int_{-\xi(\nu)}^{u} \dfrac{\nu}{m\sqrt{m^2-\nu^2}}dt +
 \int_{-\xi(\nu)}^{-u} \dfrac{\nu}{m\sqrt{m^2-\nu^2}}dt.
 \end{equation}
Since $m$ is an even function,
\begin{equation*}
\int^u_{-\xi(\nu)} \frac{\nu}{m\sqrt{m^2-\nu^2}} dt=\int^0_{-\xi(\nu)}\frac{\nu}{m\sqrt{m^2-\nu^2}} dt+
\int^0_{-u}\frac{\nu}{m\sqrt{m^2-\nu^2}} dt
\end{equation*}
holds.
Therefore, by \eqref{eq:5.1}, 
$$\tilde\theta(\alpha_\nu(s_2))-\tilde\theta(\alpha_\nu(0)) =2\int^0_{-\xi(\nu)}\frac{\nu}{ \sqrt{m^2-\nu^2  }   }dt=\varphi(\nu).$$
This implies that $\alpha_\nu$ passes through the point $(\tilde t,\tilde\theta)^{-1}(u,\varphi(\nu)).$
On the other hand, after  $\beta_\nu$ is tangent to $\tilde t=\xi(\nu)$ at $\beta_\nu(s_1^+),$ where $s_1^+:=\min\{s>0 \: | \: \tilde t(\beta_\nu(s)) =\xi(\nu)\}$,
the geodesic intersects $\tilde t=u$ again at $\beta_\nu(s_2^+),$
where $s_2^+:=\min\{s>s_1^+\: | \: \tilde t(\beta_\nu(s))=u\}.$
By the similar computation as above, we get
\begin{equation*}
\tilde\theta(\beta_\nu(s_2^+))-\tilde\theta(\beta_\nu(0))=\varphi(\nu). 
\end{equation*}
This implies that  $\alpha_\nu$ and $\beta_\nu$ pass through the common  point $(\tilde t,\tilde\theta)^{-1}(u,\varphi(\nu)).$
$\qedd$
\end{proof}

\begin{lemma}\label{lem5.2}
The two  geodesic segments $\alpha_\nu |_{[0,s_2]}$ and $\beta_{\nu}|_{[0,s_2^+]}$ have the same length and its length equals $l(\nu), $ which is defined in Lemma \ref{lem3.2}.
In particular, $s_2=s_2^+.$
Here, $s_2$ and $s_2^+$ denote the numbers defined in the proof of Lemma \ref{lem5.1}.

\end{lemma}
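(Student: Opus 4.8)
The plan is to compute the lengths of the two segments directly from the length formula \eqref{eq:3.5}, exploiting the fact that each geodesic is monotone in $\tilde t$ away from its single turning point, so that the sign $\varepsilon(t'(s))$ is constant on each monotone piece and the integrals are unambiguous. I would split each of $\alpha_\nu|_{[0,s_2]}$ and $\beta_\nu|_{[0,s_2^+]}$ at the parameter value where it is tangent to its parallel, reducing each length to a sum of two monotone integrals of $m/\sqrt{m^2-\nu^2}$, and then collapse these sums using the evenness of $m$.

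For $\alpha_\nu$, which starts at $\tilde t=-u$ with $(\tilde t\circ\alpha_\nu)'<0$, the segment consists of a descending arc from $\tilde t=-u$ to the turning height $\tilde t=-\xi(\nu)$ followed by an ascending arc from $\tilde t=-\xi(\nu)$ to $\tilde t=u$. Applying \eqref{eq:3.5} to each piece (with $\varepsilon(t')=-1$ on the first and $\varepsilon(t')=+1$ on the second) gives
\begin{equation*}
L(\alpha_\nu|_{[0,s_2]})=\int_{-\xi(\nu)}^{-u}\frac{m}{\sqrt{m^2-\nu^2}}\,dt+\int_{-\xi(\nu)}^{u}\frac{m}{\sqrt{m^2-\nu^2}}\,dt.
\end{equation*}
Since $m$ is even the integrand is even, so substituting $t\mapsto -t$ in the first integral and splitting the second at $t=0$ lets me recombine the resulting pieces into $2\int_{0}^{\xi(\nu)} m/\sqrt{m^2-\nu^2}\,dt$, which equals $l(\nu)$ by \eqref{eq:3.7} and the evenness of $m$.

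For $\beta_\nu$, which starts at $\tilde t=-u$ with $(\tilde t\circ\beta_\nu)'>0$ and is tangent to $\tilde t=\xi(\nu)$, the identical bookkeeping applied to the ascending arc $-u\to\xi(\nu)$ and the descending arc $\xi(\nu)\to u$ yields the same value $l(\nu)$. This proves $L(\alpha_\nu|_{[0,s_2]})=L(\beta_\nu|_{[0,s_2^+]})=l(\nu)$; and since both geodesics are unit speed, arc length coincides with the parameter, so $s_2=l(\nu)=s_2^+$.

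There is no serious obstacle here: the computation is routine once Lemma \ref{lem5.1} fixes the common endpoint $(\tilde t,\tilde\theta)^{-1}(u,\varphi(\nu))$ and the turning heights $\mp\xi(\nu)$. The only points requiring care are keeping $\varepsilon(t')$ correct across the turning point so that each monotone integral is positive, and checking that $m>\nu$ on the relevant intervals and that $\xi(\nu)>u$ so the integrals make sense—both of which follow from $m$ being even and decreasing on $(0,t_0)$ together with the inequality $\nu<m(u)$ forced by the existence of the geodesics through $q$.
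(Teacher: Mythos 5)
Your proof is correct and follows essentially the same route as the paper: split each segment at its tangency with the parallel $\tilde t=\mp\xi(\nu)$, apply the length formula \eqref{eq:3.5} to each monotone piece, and use the evenness of $m$ to collapse the sum to $2\int_{-\xi(\nu)}^{0} m/\sqrt{m^2-\nu^2}\,dt=l(\nu)$. The concluding observation that $s_2=s_2^+$ because the geodesics are unit speed is also the intended reading of the paper's ``in particular.''
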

\begin{proof}
From \eqref{eq:3.5}, we have
\begin{equation}\label{eq:5.2}
 L( \alpha_\nu |_{[0,s_1]})=
 \int_{-\xi(\nu)}^{-u} \dfrac{m}{\sqrt{m^2-\nu^2}}dt,
 \end{equation}
and
\begin{equation*}
 L( \alpha_\nu |_{[s_1,s_2]} )=
 \int_{-\xi(\nu)}^{u} \dfrac{m}{\sqrt{m^2-\nu^2}}dt=
 \int_{-\xi(\nu)}^{0} \dfrac{m}{\sqrt{m^2-\nu^2}}dt +
 \int_{0}^{u} \dfrac{m}{\sqrt{m^2-\nu^2}}dt,
 \end{equation*}
where  $s_1$ denotes the number defined in the proof of Lemma \ref{lem5.1}.
Since $m$ is even
\begin{equation}\label{eq:5.3}
L(\alpha_\nu|_{[s_1,s_2]} )=\int_{-\xi(\nu)}^{0} \frac{m}{\sqrt{m^2-\nu^2}}dt+
 \int_{-u}^{0} \frac{m}{\sqrt{m^2-\nu^2}}dt.
 \end{equation}
Therefore, we get, by \eqref{eq:3.7}, \eqref{eq:5.2} and \eqref{eq:5.3},
\begin{equation*}
L( \alpha_\nu |_{[0,s_2]}   )=
2\int_{-\xi(\nu)}^{0} \dfrac{m}{\sqrt{m^2-\nu^2}}dt=l(\nu).
\end{equation*}
Analogously we have,
\begin{equation*}
L(\beta_\nu|_{[0,s_2^+]})= l(\nu).
\end{equation*}

$\qedd$
\end{proof}

\begin{lemma}\label{lem5.3}
Let $q$ be  a point on $\wt M$ with $|\tilde t(q)|\in(0,t_0).$ 
Then, for any $\nu\in(\inf m,m(u)],$ where $u=-\tilde t(q),$
$\alpha_\nu|_{[0,s_2(\nu)]}$ and $\beta_\nu|_{  [0,s_2(\nu)]}$ are minimal geodesic segments joining $q$ to the point $(\tilde t,\tilde \theta)^{-1}(u,\tilde \theta(q)+\varphi(\nu)), $ and 
in particular,
$\{(\tilde t,\tilde \theta) \: | \: \tilde t=u, \tilde \theta\geq \varphi(m(u))+\tilde\theta(q)\} $
is a subset of the cut locus of the point $q.$
Here,   $s_2(\nu):=\min\{s>0 \: |\:  \tilde t(\alpha_\nu(s))=u\}$ for each $\nu\in(\inf m,m(0)).$ 
\end{lemma}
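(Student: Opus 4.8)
The plan is to reduce the whole statement to a single minimality assertion and then sweep out a family of cut points. Write $x_\nu:=(\tilde t,\tilde\theta)^{-1}(u,\varphi(\nu))$. By Lemma \ref{lem5.1} and Lemma \ref{lem5.2}, for $\nu\in(\inf m,m(u))$ the segments $\alpha_\nu|_{[0,s_2(\nu)]}$ and $\beta_\nu|_{[0,s_2(\nu)]}$ are two \emph{distinct} geodesic segments from $q$ to $x_\nu$ of the common length $l(\nu)$ (distinct because $(\tilde t\circ\alpha_\nu)'(0)<0<(\tilde t\circ\beta_\nu)'(0)$), while at the endpoint $\nu=m(u)$ they coincide with the single geodesic tangent to $\tilde t=u$. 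Hence the lemma follows once I prove that $\alpha_\nu|_{[0,s_2(\nu)]}$ is minimal: if it is, then $\beta_\nu|_{[0,s_2(\nu)]}$ is also minimal, since its length $l(\nu)$ equals $d(q,x_\nu)$, so $x_\nu$ admits two distinct minimal connections, whence $x_\nu\in C_q$ and the cut parameter along each geodesic is exactly $s_2(\nu)$.

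First I would record that the two candidates meet only at $x_\nu$. Parametrizing each by height $\tilde t=a\in(-u,u)$ and using \eqref{eq:3.4}, a short computation shows that at every such level the $\tilde\theta$-coordinate of $\alpha_\nu$ exceeds that of $\beta_\nu$ by exactly $2\int_{-\xi(\nu)}^{-u}\frac{\nu}{m\sqrt{m^2-\nu^2}}\,dt>0$; since $\xi(\nu)>u$, the two arcs are disjoint for $\tilde t\in(-u,u)$ and can coincide only at $x_\nu$. This is what makes $x_\nu$ an honest crossing of the cut locus rather than a conjugate tangency.

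The heart of the proof is the minimality of $\alpha_\nu|_{[0,s_2(\nu)]}$, and here I would split the geodesic at its unique equator crossing $w:=\alpha_\nu(\sigma)$, $\tilde t(w)=0$. The lower arc $\alpha_\nu|_{[0,\sigma]}\subset\tilde t^{-1}(-\infty,0]$ is minimal from $q$ by Lemma \ref{lem4.5}, and the upper arc $\alpha_\nu|_{[\sigma,s_2(\nu)]}$ is minimal from $w$ because, again by Lemma \ref{lem4.5}, the cut locus of the equator point $w$ lies in $\tilde t^{-1}(0)$, which the upper arc never re-enters. The genuine difficulty, and the step I expect to be the main obstacle, is to upgrade these two one-sided statements to global minimality of the concatenation, i.e.\ to rule out a shorter competitor crossing the equator elsewhere. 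I would phrase this as a one-variable problem: set $g(\theta_0):=d(q,(0,\theta_0))+d((0,\theta_0),x_\nu)$, so that $d(q,x_\nu)=\min_{\theta_0}g(\theta_0)$. The isometry $(\tilde t,\tilde\theta)\mapsto(-\tilde t,\varphi(\nu)-\tilde\theta)$ exchanges $q$ and $x_\nu$, giving $g(\theta_0)=g(\varphi(\nu)-\theta_0)$, and the crossings of $\alpha_\nu$ and $\beta_\nu$ furnish two symmetric critical points of $g$ (no broken angle at $w$) of common value $l(\nu)$. The plan is to prove these are the global minima: that $g(\theta_0)\to\infty$ as $\theta_0\to\pm\infty$ (using that $\tilde\theta\mapsto d(\,\cdot\,,(u_0,\tilde\theta))$ is strictly increasing, as established in Section 4), and that the broken path is straight only at configurations coming from genuine geodesics $q\to x_\nu$, whose lengths are controlled through $l'(\nu)=\nu\varphi'(\nu)$ (Lemma \ref{lem3.2}) together with $\varphi'\le 0$ (Proposition \ref{prop4.6}). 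The absence of a conjugate point before $x_\nu$ along $\alpha_\nu$ I would handle by a Rauch comparison in the spirit of Lemma \ref{lem4.5}, now on the band $\tilde t\in[0,u]$ where the curvature is monotone.

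Finally I would let $\nu$ decrease from $m(u)$ to $\inf m$. By Proposition \ref{prop4.6} the value $\varphi(\nu)$ increases continuously from $\varphi(m(u))$, and since the turning integral defining $\varphi$ diverges as $\nu\to\inf m^+$ (the geodesics accumulate on the stable minimal parallel), one has $\varphi(\nu)\to\infty$. Thus the cut points $x_\nu$ sweep the open ray $\{\tilde t=u,\ \tilde\theta>\varphi(m(u))\}$, and its endpoint $(u,\varphi(m(u)))$ lies in $C_q$ because the cut locus is closed. This gives $\{\tilde t=u,\ \tilde\theta\ge\varphi(m(u))\}\subset C_q$, completing the proof once the minimality step is secured.
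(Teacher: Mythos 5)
Your reduction of the lemma to the single claim that $\alpha_\nu|_{[0,s_2(\nu)]}$ is minimal is correct, and so is the final sweep (continuity and monotonicity of $\varphi$, divergence of $\varphi(\nu)$ as $\nu\to\inf m^{+}$, closedness of the cut locus). But that minimality claim \emph{is} the lemma, and you do not prove it: you label it ``the main obstacle'' and offer only a plan. The plan as stated has real holes. The function $g(\theta_0)=d(q,(0,\theta_0))+d((0,\theta_0),x_\nu)$ need not be differentiable where $(0,\theta_0)$ is a cut point of $q$ or of $x_\nu$; at this stage Lemma \ref{lem4.5} only places $C_q$ inside $\tilde t^{-1}[0,\infty)$, so equatorial cut points of $q$ are not yet excluded. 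Even granting that the global minimum of $g$ is attained at a smooth critical point, i.e.\ at a genuine geodesic from $q$ to $x_\nu$, you would still have to enumerate those geodesics --- besides $\alpha_\mu$ and $\beta_\mu$ with $\varphi(\mu)=\varphi(\nu)$ there are potential competitors that oscillate several times between the parallels $\pm\xi(\mu)$ --- and compare their lengths; you give no mechanism for that comparison beyond naming the identities $l'=\nu\varphi'$ and $\varphi'\le 0$. So the argument does not close.

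The paper settles the step by contradiction rather than by a broken-geodesic minimization. Suppose $\alpha_{\nu_0}|_{[0,s_2(\nu_0)]}$ is not minimal for some $\nu_0\in(\inf m,m(u)]$, where $\nu_0$ is taken to be the smallest $\nu$ with $\varphi(\nu)=\varphi(\nu_0)$. A genuinely minimal geodesic $\alpha$ from $q$ to $x_{\nu_0}$ must itself be $\alpha_{\nu_1}|_{[0,s_2(\nu_1)]}$ or $\beta_{\nu_1}|_{[0,s_2(\nu_1)]}$ for its own Clairaut constant $\nu_1$, with $\varphi(\nu_1)=\varphi(\nu_0)$. Since $\varphi$ is decreasing (Proposition \ref{prop4.6}) and $\nu_0$ is the least solution, $\varphi$ is constant on the interval between $\nu_0$ and $\nu_1$; by \eqref{eq:3.8} one has $l'=\nu\varphi'=0$ there, hence $l(\nu_1)=l(\nu_0)$, and by Lemma \ref{lem5.2} the supposed shortcut has the same length as $\alpha_{\nu_0}|_{[0,s_2(\nu_0)]}$ --- a contradiction. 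This is precisely the missing step: the identities you cite are the right ingredients, but they must be deployed against a hypothetical shorter competitor from the same family, not inside a variational argument over the equator crossing.
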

\begin{proof}
Without loss of generality, we may assume that $\tilde\theta(q)=0.$
We will prove that $\alpha_{\nu}|_{[0,s_2(\nu)]}$ is a minimal geodesic segment joining $q$ to the point $\alpha_{\nu}(s_2(\nu))=(\tilde t,\tilde\theta)^{-1}(u,\varphi(\nu)).$
Suppose that $\alpha_{\nu_0}|_{[0,s_2(\nu_0)]}$ is not minimal for some $\nu_0 \in (\inf m,m(u)].$
Here we assume that $\nu_0$ is the minimum solution $\nu=\nu_0$ of 
$\varphi(\nu)=\varphi(\nu_0).$

Let  $\alpha:[0,d(q,x)]\to M$ be a minimal geodesic  segment  joining $q$ to $x:=\alpha_{\nu_0}(s_2(\nu_0))=(\tilde t,\tilde\theta)^{-1}(u,\varphi(\nu_0)).$
Hence,  $\varphi(\nu_1)=\varphi(\nu_0)=\tilde\theta(x)$ and 
$\alpha $ equals $\alpha_{\nu_1}|_{[0,s_2(\nu_1)]}$ or $\beta_{\nu_1}|_{[0,s_2(\nu_1)]},$
 where
$\nu_1 \in(\inf m,m(0))$ 
denotes the Clairaut constant of $\alpha.$
By Proposition \ref{prop4.6}, $\varphi(\nu)=\varphi(\nu_0)$ for any
 $\nu\in[\nu_0,\nu_1].$
 Hence, by Lemmas \ref{lem3.2} and \ref{lem5.2}
 we get,
\begin{equation*}
s_2(\nu_1)=L(\alpha)=L(\alpha_{\nu_1}|_{[0,s_2(\nu_1)]})=L(\alpha_{\nu_0}|_{[0,s_2(\nu_0)]})=s_2(\nu_0).
\end{equation*}
This implies that $\alpha_{\nu_0}|_{[0,s_2(\nu_0)]}$ is minimal, which is a  contradiction, since we assumed that  $\alpha_{\nu_0}|_{[0,s_2(\nu_0)]}$ is not minimal.
Therefore, by Lemma \ref{lem5.2},   
for any $\nu\in(\inf m,m(u)],$
the  geodesic segments $\alpha_{\nu}|_{[0,s_2(\nu)]}$ and 
$\beta_{\nu}|_{[0,s_2(\nu)]}$
 are minimal geodesic segments joining $q$ to the point $(\tilde t,\tilde\theta)^{-1}(u,\varphi(\nu))=\alpha_\nu(s_2(\nu)).$
In particular,   the point 
$\alpha_\nu(s_2(\nu))=\beta_\nu(s_2(\nu))   $ 
is a cut point of $q.$
$\qedd$
\end{proof}

\begin{proposition}\label{prop5.4}
The cut locus of the point $q$ in Lemma \ref{lem5.3} equals the set
$$ 
\{(\tilde t,\tilde \theta) \: | \: \tilde t=u, \tilde \theta\geq |\varphi(m(u))|\}. $$
Here the coordinates $(\tilde t , \tilde\theta)$ are chosen so as to satisfy $\tilde\theta(q)=0.$
\end{proposition}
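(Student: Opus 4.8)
The plan is to prove the asserted equality by two inclusions, reducing everything to the half $\tilde\theta\ge 0$ by symmetry. The map $\sigma(\tilde t,\tilde\theta)=(\tilde t,-\tilde\theta)$ is an isometry of $\wt M$ fixing $q$, so $C_q=\sigma(C_q)$; hence it suffices to determine $C_q\cap\{\tilde\theta\ge 0\}$ and reflect. For the inclusion ``$\supseteq$'' there is essentially nothing to do: Lemma \ref{lem5.3} already gives $\{\tilde t=u,\ \tilde\theta\ge\varphi(m(u))\}\subseteq C_q$, and applying $\sigma$ yields $\{\tilde t=u,\ \tilde\theta\le-\varphi(m(u))\}\subseteq C_q$ as well, so the whole set $\{\tilde t=u,\ |\tilde\theta|\ge\varphi(m(u))\}$ lies in the cut locus. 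Thus the real content is the reverse inclusion: that $q$ has no cut point off these two rays.

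Since $\tilde t(q)=-u\le 0$, Lemma \ref{lem4.5} applies to $q$ and gives $C_q\subseteq\tilde t^{-1}[0,\infty)$ at once. I would then classify the unit geodesics emanating from $q$ by their Clairaut constant $\nu\in[-m(u),m(u)]$ and by the initial sign of $t'$, and argue that the only ones carrying a cut point are the turning geodesics of Section 5, whose cut point is pinned to the ray. Concretely: (i) the meridian $\tilde\theta=0$ is the \emph{unique} minimizer to each of its points, since any curve has length $\int\sqrt{\dot{\tilde t}^2+m^2\dot{\tilde\theta}^2}\,ds\ge\int|\dot{\tilde t}|\,ds\ge|\Delta\tilde t|$ with equality only for a meridian arc; this removes all of $\{\tilde\theta=0\}$, in particular its part with $\tilde t\ge 0$. (ii) For $\nu\in(\inf m,m(u)]$ the segments $\alpha_\nu|_{[0,s_2(\nu)]}$ and $\beta_\nu|_{[0,s_2(\nu)]}$ are minimal and their common endpoint $(u,\varphi(\nu))$ is a cut point by Lemma \ref{lem5.3}; hence along each such geodesic the cut time equals \emph{exactly} $s_2(\nu)$ and the corresponding cut point sits on the ray. (iii) The downward directions with $|\nu|\le\inf m$ give geodesics whose $\tilde t$-coordinate decreases monotonically (or tends to $-t_0$ when $\nu=\inf m$), so they stay in $\tilde t^{-1}(-\infty,0)$ and carry no cut point, again by Lemma \ref{lem4.5}.

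The one remaining, and genuinely delicate, case is that of the \emph{upward non-turning} directions, namely $0<\nu\le\inf m$ with $t'(0)>0$: here $\tilde t$ increases monotonically, escaping toward $+\infty$ (or spiralling up to the neck $\tilde t=t_0$ when $\nu=\inf m$), so the geodesic leaves the region reached by the turning family and must be shown to stay minimal for all time. My plan is to exploit that such a geodesic eventually enters $\tilde t^{-1}[t_1,\infty)$, where $K\le 0$, so no conjugate point can form there, and to control the positive-curvature band $\tilde t\in[0,t_1]$ by a Rauch-type comparison in the spirit of Lemma \ref{lem4.5}, now applied on the upper half where $G$ is decreasing, thereby excluding both a conjugate point and a second minimizer. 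Once every initial direction is accounted for, the cut-time function is $+\infty$ except on the turning directions, where the cut point lies on $\{\tilde t=u,\ |\tilde\theta|\ge\varphi(m(u))\}$; equivalently $\exp_q$ is a diffeomorphism off the tangent cut locus onto $\wt M$ minus the two rays, and therefore $C_q$ is exactly these rays. I expect this upward non-turning case---ruling out a competing minimizer for geodesics that escape through the positive-curvature band---to be the main obstacle, all other cases reducing cleanly to Lemmas \ref{lem4.5} and \ref{lem5.1}--\ref{lem5.3} together with the elementary length estimate.
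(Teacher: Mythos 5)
Your reduction by the reflection $\tilde\theta\mapsto-\tilde\theta$, the inclusion ``$\supseteq$'' from Lemma \ref{lem5.3}, the use of Lemma \ref{lem4.5} to push everything into $\tilde t^{-1}[0,\infty)$, and the classification of directions by Clairaut constant are all sound, and you have correctly isolated where the difficulty lies. But the case you flag as ``the main obstacle'' --- the upward non-turning directions with $0<\nu\le\inf m$ --- is exactly the case your proposal does not actually close, and the tool you propose for it cannot close it. A Rauch-type comparison controls conjugate points only; it says nothing about the existence of a second minimizing geodesic arriving at the same point, so the phrase ``thereby excluding both a conjugate point and a second minimizer'' hides the entire remaining content. (Even the conjugate-point half of your plan is shaky as stated: a Jacobi field vanishing at $q$ can still acquire a zero after the geodesic has entered the region $K\le 0$ if it is decreasing when it gets there, so ``no conjugate point can form where $K\le 0$'' needs an argument.)

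The paper closes this case by a global, indirect argument that you should compare with your direction-by-direction plan. One first observes that the limit geodesics $\alpha^{-}=\alpha_{\inf m}$ and $\beta^{+}=\beta_{\inf m}$ are rays (limits of the minimal segments of Lemma \ref{lem5.3}), so a hypothetical cut point $y$ off the two horizontal rays must lie in the region $D^{+}$ bounded by $\beta^{+}$ and the upper submeridian. Because the cut locus of a point in the simply connected surface $\wt M$ is a tree, the component of $C_q$ through $y$ must have an \emph{end point} $x$ inside $D^{+}$, and an end point is necessarily conjugate to $q$ along every minimal geodesic reaching it. But any minimal geodesic from $q$ into $D^{+}$ has Clairaut constant in $(0,\inf m)$, hence is never tangent to a parallel, hence (Corollary 7.2.1 of \cite{SST}) carries no conjugate point --- a contradiction. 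It is precisely this combination (tree structure $\Rightarrow$ end point $\Rightarrow$ conjugate point $\Rightarrow$ impossible for non-turning geodesics) that replaces the ``exclude a second minimizer'' step you left open; without some substitute for it, your proof is incomplete.
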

\begin{proof}
By Lemma \ref{lem5.3},
geodesic segments $\alpha_\nu|_{[0,s_2(\nu)]}$ and $\beta_\nu|_{[0,s_2(\nu)]}$
are minimal geodesic segments for any $\nu\in(\inf m,m(u)].$
Hence their limit geodesics $\alpha^{-}:=\alpha_{\inf m}$ and $\beta^+:=\beta_{\inf m}$ are 
rays, that is, any  their  subarcs are minimal.

Since $\wt M$ has a reflective symmetry with respect to $\tilde\theta=0,$
it is trivial from Lemma \ref{lem5.3} that 
the set 
$\{(\tilde t,\tilde \theta) \: | \: \tilde t=u, \tilde \theta\geq |\varphi(m(u))|\}$
is  a subset of the cut locus of $q.$
Suppose that there exists a cut point $y\notin 
\{(\tilde t,\tilde \theta) \: | \: \tilde t=u, \tilde \theta\geq| \varphi(m(u))| \}.$
Without loss of generality, we may assume that $\tilde\theta(y)>0=\tilde\theta(q)$ and $\tilde  t(q)=-u<0.$
From Lemma \ref{lem4.5},
$\tilde t(y)>0$  and $y$  is not a point in the unbounded domain cut off by two rays $\alpha^-$ and $\beta^+,$ and hence the point lies in the domain $D^+$ cut off by  $\beta^+$ and the submeridian $\tilde t>-u, \tilde \theta= \tilde \theta(q)=0.$
Since the cut locus of $C_q$ has  a tree structure,
there exists an end point $x$ of the cut locus in the $D^+.$
Hence, $x$ is conjugate to $q$
for any minimal geodesic segment $\gamma$ joining $q$ to $x.$ Since such a minimal geodesic $\gamma$ runs in the domain $D^+,$ the Clairaut constant of the segment is positive and  less than $\inf m.$
From  the Clairaut relation \eqref{cl}, any geodesic cannot be tangent to any parallel arc $\tilde t=c,$ if the Clairaut constant is positive and less than $\inf m.$
From Corollary  7.2.1 in \cite{SST}, $\gamma$ has no conjugate point of $q,$ which is a contradiction.
$\qedd$
\end{proof}

\begin{lemma}\label{lem5.5}
Let $q$ be  a point on $\wt M$ with $|\tilde t(q)|\geq t_0.$
Then the cut locus of $q$ is empty. 
\end{lemma}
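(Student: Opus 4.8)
The plan is to show that $q$ has no conjugate point along any geodesic, and then to derive the emptiness of $C_q$ from the tree structure of the cut locus together with the fact, recorded in Section~4, that every end point of the cut locus is conjugate to its base point. By the reflective symmetries of $\wt M$ (in $\tilde t$ and, after a translation, in $\tilde\theta$) I may assume $\tilde\theta(q)=0$ and $\tilde t(q)=-u$ with $u=|\tilde t(q)|\ge t_0$; note $t_0>0$ since $m''(0)=-K(0)m(0)<0$. If $t_0=\infty$ the hypothesis is vacuous, so I assume $t_0<\infty$, whence $\inf m=m(t_0)$. Recall from Lemma~\ref{lem2.2} and Section~3 that $m$ attains its minimum $\inf m$ at $\pm t_0$, that $m$ is monotone on each of $[t_0,\infty)$ and $(-\infty,-t_0]$, and that there is $t_1\in(0,t_0)$ with $K\le 0$ on $\{|\tilde t|\ge t_1\}$; in particular $K\le 0$ on the closed region $\tilde t^{-1}(-\infty,-t_0]$, since $t_0>t_1$.

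First I would establish the \emph{Claim}: no geodesic $\gamma$ emanating from $q$ has a conjugate point of $q$. Let $\nu$ be the Clairaut constant of a unit speed geodesic $\gamma$ from $q$, so $|\nu|\le m(u)$ by \eqref{cl}. If $|\nu|<\inf m$, then $m(\tilde t(\gamma(s)))\ge\inf m>|\nu|$ for every $s$, so by \eqref{eq:3.3} the function $\tilde t\circ\gamma$ is strictly monotone and $\gamma$ is tangent to no parallel; Corollary 7.2.1 in \cite{SST} then gives that $\gamma$ has no conjugate point of $q$ (this case includes the meridians, $\nu=0$). If instead $\inf m\le|\nu|\le m(u)$, I would show that $\gamma$ stays inside $\tilde t^{-1}(-\infty,-t_0]$: going downward from $\tilde t=-u$ the value of $m$ only increases, so by \eqref{eq:3.3} the sign of $t'$ is preserved and $\tilde t\circ\gamma$ decreases to $-\infty$; going upward, $m$ decreases from $m(u)$ to $\inf m$ on $[-u,-t_0]$, so $\gamma$ meets its turning parallel $m(\tilde t)=|\nu|$ at some $\tilde t^{*}\le -t_0$ and turns back. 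Hence the whole trajectory lies in $\tilde t^{-1}(-\infty,-t_0]$, where $K\le 0$, and the Rauch comparison theorem applied against a nonpositively curved model (exactly as in the proof of Lemma~\ref{lem4.5}) shows that $\gamma$ has no conjugate point. This proves the Claim.

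To finish, I would argue by contradiction: suppose $C_q\ne\emptyset$. Since $\wt M$ is simply connected, $C_q$ is a local tree containing no circle, so it admits an end point $x$ (as discussed in Section~4). By Lemma~\ref{lem4.3}, $x$ is conjugate to $q$ along any minimal geodesic joining $q$ to $x$, contradicting the Claim. Therefore $C_q=\emptyset$. The main obstacle I expect is the second case of the Claim: one must check carefully that \emph{every} geodesic with $|\nu|\ge\inf m$ turns back before leaving $\tilde t^{-1}(-\infty,-t_0]$, so that it never enters the region $|\tilde t|<t_1$ where $K$ may be positive; this includes the delicate limiting value $|\nu|=\inf m$, where $\gamma$ is tangent to, or asymptotic to, the closed geodesic $\tilde t=-t_0$. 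It is precisely here that the hypothesis $u\ge t_0$ (rather than $u<t_0$, the case treated in Proposition~\ref{prop5.4}, where such geodesics cross the equator and create genuine cut points) is used in an essential way.
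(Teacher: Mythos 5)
Your proof is correct, and it reaches the paper's contradiction (an end point of the cut locus must be conjugate to $q$, yet no conjugate point exists) by a somewhat different route. The paper never needs your second case: it invokes Lemma \ref{lem4.5} to place every cut point of $q$ in $\tilde t^{-1}[0,\infty)$, so that a minimal geodesic $\gamma$ from $q$ to an end point $x$ necessarily crosses the parallel $\tilde t=-t_0$, where $m=\inf m$; the Clairaut relation \eqref{cl} at that crossing forces $0<\nu<\inf m$, hence no tangency to any parallel, hence no conjugate point by Corollary 7.2.1 of \cite{SST}. You instead prove the stronger statement that \emph{no} geodesic from $q$ carries a conjugate point, which obliges you to treat the geodesics with $\inf m\leq|\nu|\leq m(u)$; your trapping argument for these is sound (such a geodesic cannot cross $\tilde t=-t_0$ without being tangent to that closed-geodesic parallel, hence coinciding with it, so it stays in $\tilde t^{-1}(-\infty,-t_0]$, where $K\leq0$ by Lemma \ref{lem2.2} since $t_0>t_1$, and nonpositive curvature along the geodesic precludes conjugate points via the Jacobi equation --- a cleaner justification than ``Rauch as in Lemma \ref{lem4.5}'', which there compares two geodesics on the same surface). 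What the paper's route buys is economy: Lemma \ref{lem4.5} is already available and eliminates the large-$\nu$ geodesics from consideration entirely. What your route buys is independence from Lemma \ref{lem4.5} in this final step and the sharper intermediate fact that $q$ has empty conjugate locus, which also explains structurally why the threshold $t_0$ is the right one. Both arguments rest equally on the Section 4 structure theory guaranteeing that a nonempty cut locus on the simply connected $\wt M$ has an end point, and that end points are conjugate points.
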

\begin{proof}
Suppose that the cut locus of  a point $q$ with $|\tilde t(q)|\geq t_0$ is nonempty.
Since $\wt M$ has a reflective symmetry with respect to $\tilde t=0,$ we may assume that $\tilde t(q)\leq -t_0.$ Hence by Lemma \ref{lem4.5}, there exists an end point $x$ of the cut locus  $C_q$ in $\tilde t^{-1}(0,\infty).$
Let $\gamma : [0,d(q,x)]\to \wt M$ denote a minimal geodesic segment joining $q$ to $x.$ Then $x$ is conjugate to $q$ along $\gamma,$ since $x$ is  an end point of $C_q.$
Since $\tilde \theta(x)>0=\tilde \theta (q),$ the Clairaut constant $\nu$ of $\gamma$ is positive, by \eqref{eq:3.1}. Moreover,
from the Clairaut relation \eqref{cl}, the Clairaut constant $\nu$  is  less than $\inf m=m(t_0),$ since $\gamma$ intersects $\tilde t=-t_0.$ Therefore, $\gamma$ cannot be tangent to any parallel arc $\tilde t=c.$ From Corollary  7.2.1 in \cite{SST}, $\gamma$ has no conjugate point of $q,$ which is a contradiction.
$\qedd$
\end{proof}
Now our Main theorem is clear from Proposition \ref{prop5.4} and Lemma \ref{lem5.5}.
\bigskip

{\noindent\large\bf Acknowledgments}

I would like to express my gratitude to Professor Minoru TANAKA 
who kindly gave me guidance for the lectures and numerous comments.


\bigskip

\begin{center}
Pakkinee CHITSAKUL\\

\bigskip

Department of Mathematics\\
King Mongkut's Institute of Technology Ladkrabang\\
Ladkrabang, Bangkok\\
10\,--\,520 Thailand

\medskip
{\tt kcpakkin@kmitl.ac.th}
\end{center}

\end{document}